\providecommand{\abs}[1]{\lvert#1\rvert}
\providecommand{\absol}[1]{\big{\lvert}#1\big{\rvert}}
 \theoremstyle{definition}
 \newtheorem{theorem}{Theorem}[section]
 \newtheorem{corollary}[theorem]{Corollary}
 \newtheorem{lemma}[theorem]{Lemma}
 \newtheorem{proposition}[theorem]{Proposition}
 \theoremstyle{definition}
 \newtheorem{defi}[theorem]{Definition}
 \theoremstyle{definition}
 \newtheorem{remark}[theorem]{Remark}
 \numberwithin{equation}{section}
\begin{document}
 
\title{Reconstruction of solutions to a generalized Moisil-Teodorescu system in Jordan domains with rectifiable boundary}

\author{Daniel Gonz\'alez-Campos$^{(1)}$, Marco Antonio P\'erez-de la Rosa$^{(2)}$\\and\\ Juan Bory-Reyes$^{(3)}$}

\date{ \small  $^{(1)}$   Escuela Superior de F\'isica y Matem\'aticas.  Instituto Polit\'ecnico Nacional. CDMX. 07738. M\'exico. \\ E-mail: daniel\_uz13@hotmail.com \\
	$^{(2)}$ Department of Actuarial Sciences, Physics and Mathematics, Universidad de las Am\'ericas Puebla.
	San Andr\'es Cholula, Puebla. 72810. M\'exico. \\ Email: marco.perez@udlap.mx \\
	$^{(3)}$ ESIME-Zacatenco. Instituto Polit\'ecnico Nacional. CDMX. 07738. M\'exico. \\ E-mail: juanboryreyes@yahoo.com }

\maketitle	
\begin{abstract}
	
In this paper we consider the problem of reconstructing solutions to a generalized Moisil-Teodorescu system in Jordan domains of $\mathbb{R}^{3}$ with rectifiable boundary. In order to determine conditions for existence of solutions to the problem we embed the system in an appropriate generalized quaternionic setting.
\vspace{0.3cm}

\small{
\noindent
\textbf{Keywords.} Quaternionic analysis;  Moisil-Teodorescu system; vector fields theory.\\
\noindent
\textbf{Mathematics Subject Classification.} 30G35, 32A40.} 
\end{abstract}

\section{Introduction}

Quaternionic analysis is a function theory that offers a natural and elegant extension of classic complex analysis in the plane to the quaternion skew-field (denoted by $\mathbb{H}$) generated by a real unit $1$ and the imaginary units $\textbf{i}$, $\textbf{j}$, $\textbf{k}$. This analysis relies heavily on results on quaternion-valued functions that are defined in domains of $\mathbb{R}^{s}, s=3, 4$ and that are null solutions of generalized Cauchy-Riemann or Dirac operator \cite{23, 24}.

Nowadays, quaternionic analysis has proven to be a good tool to study numerous mathematical models of spatial physical phenomena related to many different extensions of the Moisil-Teodorescu system, see \cite{9, 10, 30, 31, 32, 33, 34}.

The use of a general orthonormal basis $\psi:=\{\psi^1,\psi^2,\psi^3\} \in\mathbb{H}^3$, called structural set, instead of the standard basis in $\mathbb{R}^{3}$, and introducing a generalized Moisil-Teodorescu system associated to $\psi$ are the cornerstone of a generalized quaternionic analysis. 

The pioneers works on this theory, centered around the concept of $\psi$-hyperholomorphic functions defined on domains in $\mathbb{R}^{3}$ (or $\mathbb{R}^{4}$), were proposed independently by Naser \cite{3}, Nono \cite{4,5}, Shapiro and Vasilevsky \cite{6,7}. More information about the importance of the structural sets can be found in \cite{14, 21, 22, 25, 26, 27, 28}.

The structural set $\psi^\theta:=\{\textbf{i},\,  \textbf{i}e^{\textbf{i}\theta}\textbf{j},\, e^{\textbf{i}\theta}\textbf{j}\}$, for $0\leqslant\theta\leqslant2\pi$ fixed and its associated operator 
$${^{\psi^{\theta}}}D:=\displaystyle\frac{\partial}{\partial x_{1}}\textbf{i}+\frac{\partial}{\partial x_{2}}\textbf{i}e^{\textbf{i}\theta}\textbf{j}+\frac{\partial}{\partial x_{3}} e^{\textbf{i}\theta}\textbf{j}$$ 
are used in \cite{1} to study solvability conditions for a non-homogeneous generalized Moisil-Teodorescu system. To achieve our goals, we consider the homogeneus case of such Moisil-Teodorescu system.
	
Let $\Omega\subset \mathbb{R}^{3}$ be a Jordan domain (see \cite{8}) with rectifiable boundary $\Gamma$ (see \cite{2}), i.e. $\Gamma$ is the image of some bounded subset of $\mathbb{R}^2$ under a Lipschitz mapping, and let $0\leq\theta\leq 2\pi$:
	\begin{equation} \label{sed}
	\left\{
	\begin{array}{rcl}
	-\displaystyle \frac{\partial f_{1}}{\partial x_{1}}+\left(\frac{\partial f_{2}}{\partial x_{2}}-\frac{\partial f_{3}}{\partial x_{3}}\right)\sin\theta-\left(\frac{\partial f_{3}}{\partial x_{2}}+\frac{\partial f_{2}}{\partial x_{3}}\right)\cos\theta & = & 0, 
	\\ {}\\	\displaystyle {\left(\frac{\partial f_{3}}{\partial x_{3}}-\frac{\partial f_{2}}{\partial x_{2}}\right)}\cos\theta-\left(\frac{\partial f_{3}}{\partial x_{2}}+\frac{\partial f_{2}}{\partial x_{3}}\right)\sin\theta & = & 0,
	\\ {}\\ \displaystyle {-\frac{\partial f_{3}}{\partial x_{1}}+\frac{\partial f_{1}}{\partial x_{3}}\sin\theta+\frac{\partial f_{1}}{\partial x_{2}}\cos\theta}  & = & 0, \\ {}\\
	\displaystyle {\frac{\partial f_{2}}{\partial x_{1}}-\frac{\partial f_{1}}{\partial x_{3}}\cos\theta+\frac{\partial f_{1}}{\partial x_{2}}\sin\theta} & = & 0, 
	\end{array}
	\right. 
	\end{equation}
with unknown functions $f_m, m=1,2,3$, which belong to $C^1(\Omega,\mathbb{C})\cap C^0(\Omega\cup\Gamma,\mathbb{C})$.

Throughout this work, rectifiable surfaces are used as they are essentially the largest class where many basic properties of smooth surfaces have reasonable analogues. For example, Ra\-de\-ma\-cher's theorem (see \cite{19}) ensures that for a rectifiable surface $\Gamma$ there exists conventional plane tangent for almost every point of $\Gamma$, and exists therefore an outward pointing normal vector $\vec{\nu}(\xi)=(\nu_{1}(\xi),\,\nu_{2}(\xi),\,\nu_{3}(\xi))$ to $\Gamma$ for almost every $\xi\in\Gamma$.

It is worth pointing out in advance that system $(\ref{sed})$ corresponds to the equation ${^{\psi^{\theta}}}D[\mathbf{f}]\equiv 0$ in $\Omega$ for $\mathbf{f}:=(f_{1}, f_{2}, f_{3}): \Omega \rightarrow \mathbb{C}^{3}$. A detailed exposition of notations and definitions will be given in Section 2. 
	
We now indicate some relevant particular cases of system (\ref{sed}).	
	\begin{itemize}
		\item     \textbf{Div-rot system} (see \cite{10, 11}).
	\end{itemize}
	\begin{equation} \label{sisdr}
	\left\{
	\begin{array}{rcl}
	-\displaystyle \frac{\partial f_{1}}{\partial x_{1}}-\frac{\partial f_{2}}{\partial x_{2}}-\frac{\partial f_{3}}{\partial x_{3}}  =  0, \quad \quad \quad \quad \quad \quad \quad \quad \quad \quad \quad \quad \quad \quad
	\\ {}\\	\displaystyle \frac{\partial f_{2}}{ \partial x_{3}}-\frac{\partial f_{3}}{\partial x_{2}}  =  0, \quad	-\frac{\partial f_{2}}{\partial x_{1}}+\frac{\partial f_{1}}{\partial x_{2}}   =  0, \quad
	\frac{\partial f_{3}}{\partial x_{1}}-\frac{\partial f_{1}}{\partial x_{3}}  =  0. 
	\end{array}
	\right. 
	\end{equation}
	
The system \eqref{sisdr} is better known in the following form:
	\begin{equation} \label{div}
	\left\{
	\begin{array}{rcl}
	\text{div} \vec{f} & = & 0, 
	\\ {}\\	\text{rot} \vec{f} & = & 0,
	\end{array}
	\right.
	\end{equation}
where $\vec{f}=f_{1}\textbf{i}+f_{2}\textbf{j}+f_{3}\textbf{k}$. Whenever $f_1\mathbf{i}+f_2\mathbf{j}+f_3\mathbf{k}$ is a solution of (\ref{sed}), then $f_1\mathbf{i}+f_3\mathbf{j}+f_2\mathbf{k}$ is so of (\ref{div}) when $\theta=0$ is taken.
	\begin{itemize}
		\item  \textbf{The homogeneous Cimmino system}  (see \cite{12}).
	\end{itemize}
A particular case of the homogeneous Cimmino system is given by:
	\begin{equation}
	\left\{
	\begin{array}{rcl}
	-\displaystyle \frac{\partial f_{1}}{\partial x_{1}}+\frac{\partial f_{2}}{\partial x_{2}}-\frac{\partial f_{3}}{\partial x_{3}}  =  0, \quad \quad \quad \quad \quad \quad \quad \quad \quad \quad \quad \quad \quad \quad 
	\\ {}\\	-\displaystyle \frac{\partial f_{3}}{\partial x_{2}}-\frac{\partial f_{2}}{\partial x_{3}} =  0, \quad
	{-\frac{\partial f_{3}}{\partial x_{1}}+\frac{\partial f_{1}}{\partial x_{3}} } = 0, \quad
	{\frac{\partial f_{2}}{\partial x_{1}}+\frac{\partial f_{1}}{\partial x_{2}}}  =  0. 
	\end{array}
	\right. 
	\end{equation}
where $\vec{f}=f_{1}\textbf{i}+f_{2}\textbf{j}+f_{3}\textbf{k}$ and each function $f_{m}$, depends only of $(x_{1}, x_{2}, x_{3})$. This case is obtained from \eqref{sed} for $\displaystyle \theta=\frac{\pi}{2}$. 
	\begin{itemize}
		\item \textbf{The Riesz system} (see \cite{13, 14, 15}).
	\end{itemize}	
Let $f:(x_{0}, x_{1}, x_{2})\in \mathbb{R}^{3} \to {\text{span}_{\mathbb{R}^{3}}} \{1, \textbf{i}, \textbf{j}\}$ 
	\begin{equation}
	\left\{
	\begin{array}{rcl}
	\displaystyle \frac{\partial f_{0}}{\partial x_{0}}-\frac{\partial f_{1}}{\partial x_{1}}-\frac{\partial f_{2}}{\partial x_{2}}  =  0, 
	\quad \quad \quad \quad \quad \quad \quad \quad \quad \quad \quad \quad \quad \,
	\\ {}\\	\displaystyle \frac{\partial f_{0}}{\partial x_{1}}+\frac{\partial f_{1}}{\partial x_{0}}  =  0,
	\quad	\frac{\partial f_{0}}{\partial x_{2}}+\frac{\partial f_{2}}{\partial x_{0}}   =  0, \quad
	\frac{\partial f_{1}}{\partial x_{2}}-\frac{\partial f_{2}}{\partial x_{1}} =  0, \,\,\,
	\end{array}
	\right.
	\end{equation}
which is equivalent with the so-called Riesz system
	\begin{equation}\label{riesz}
	\left\{
	\begin{array}{rcl}
	\text{div} \bar{f} & = & 0, 
	\\ {}\\	\text{rot} \bar{f} & = & 0,
	\end{array}
	\right. 
	\end{equation}
where $\bar{f}:=f_{0}-f_{1}\textbf{i}-f_{2}\textbf{j}$. If $f_1\mathbf{i}+f_2\mathbf{j}+f_3\mathbf{k}$ is a solution of (\ref{sed}), then  $f_1+f_3\mathbf{i}+f_2\mathbf{j}$ is so of the inhomogeneous system (\ref{riesz}) for $\theta=\pi$.
	\begin{itemize}
		\item \textbf{Another particular case.}
	\end{itemize}
	\begin{equation} \label{casop}
	\left\{
	\begin{array}{rcl}
	-\displaystyle \frac{\partial f_{1}}{\partial x_{1}}-\frac{\partial f_{2}}{\partial x_{2}}+\frac{\partial f_{3}}{\partial x_{3}}  =  0,
	\quad \quad \quad \quad \quad \quad \quad \quad \quad \quad \quad \quad \quad \quad  
	\\ {}\\	\displaystyle \frac{\partial f_{3}}{\partial x_{2}}+\frac{\partial f_{2}}{\partial x_{3}} =  0,
	\quad  	{-\frac{\partial f_{3}}{\partial x_{1}}-\frac{\partial f_{1}}{\partial x_{3}}}   =  0, \quad  
	{\frac{\partial f_{2}}{\partial x_{1}}-\frac{\partial f_{1}}{\partial x_{2}}}  =  0. 
	\end{array}
	\right. 
	\end{equation}
	
To get the system \eqref{casop}, take $\displaystyle \theta=\frac{3\pi}{2}$ in \eqref{sed}. This example is related to the time-harmonic relativistic Dirac bispinor theory  (see \cite{9, 16}).
	
The purpose of this work is to study the problem of reconstruction of solutions to the generalized Moisil-Teodorescu system \eqref{sed}, which is formulated as follows: Given a continuous three-dimensional vector field $\mathbf{f}: \Gamma \rightarrow \mathbb{C}^{3}$, under which conditions can $\mathbf{f}$ be decomposed on $\Gamma$ as a sum:
	\begin{equation} \label{des}
	\mathbf{f}(t)=\mathbf{f}^{+}(t)+\mathbf{f}^{-}(t),  \quad \forall \, t\in\Gamma,
	\end{equation}
where $\mathbf{f}^{\pm}$ are extendable to vector fields $\mathbf{F}^{\pm}$ that satisfy the generalized Moisil-Teodorescu system \eqref{sed} in respectively $\Omega_{+}:=\Omega$ and  $\Omega_{-}:=\mathbb{R}^{3}\setminus\{\Omega\cup\Gamma\}$, with $\mathbf{f}^{-}(\infty)=0$?
	
In \cite{2}, is considered the problem of reconstruction of solutions to the Div-rot system \eqref{sisdr} by using quaternionic analysis tools. Our results extend the achievements of \cite{2} to the aforementioned variety of systems.
	
The integral
	\begin{equation}
	\begin{split}
	{^{\psi}}\mathbf{K}_{\Gamma}[\mathbf{f}](x)&
	:=\int_{\Gamma}{\mathscr{K}_{\psi}(x-\xi)\langle\nu_{\psi}(\xi), \mathbf{f}(\xi)\rangle}dS_{\xi}+\int_{\Gamma}{[ \mathscr{K}_{\psi}(x-\xi), [\nu_{\psi}(\xi), \mathbf{f}(\xi)]]}dS_{\xi},
	\end{split}
	\end{equation}
with $x\in\Omega_{\pm}$, plays the role of the Cauchy transform in the theory of three dimensional continuous vector fields $\mathbf{f}$, where $dS_{\xi}$ is the two-dimensional surface area element on $\Gamma$ and  $\nu_{\psi}(\xi):=\sum_{k=1}^{3}\psi^{k}\nu_{k}(\xi)$ is outward pointing normal vector to $\Gamma$.  The function $\mathscr{K}_{\psi}$ plays a similar role in the $\psi$-hyperholomorphic function theory as the Cauchy kernel does in complex analysis.
	
Similarly, the singular Cauchy transform is defined for $t\in\Gamma$ as
	\begin{equation}
	\begin{split}
	{^{\psi}\pmb{\mathscr{S}}}_{\Gamma}[\mathbf{f}](t):&=\int_{\Gamma}{\mathscr{K}_{\psi}(t-\tau)\langle\nu_{\psi}(\tau), (\mathbf{f}(\tau)-\mathbf{f}(t))\rangle}dS_{\tau}\\ & \quad +\int_{\Gamma}{[ \mathscr{K}_{\psi}(t-\tau), [\nu_{\psi}(\tau), (\mathbf{f}(\tau)-\mathbf{f}(t))]]}dS_{\tau}+\mathbf{f}(t),
	\end{split}
	\end{equation}
where the integral is being understood in the sense of the Cauchy principal value.
	
We shall use the following notation:
	\begin{equation}
	\mathscr{M}_{\psi}:=\bigg\{\mathbf{f}: \int_{\Gamma}{\langle\mathscr{K}_{\psi}(x-\xi) ,[\nu_{\psi}(\xi), \mathbf{f}(\xi)]\rangle}dS_{\xi}=0,  x\not\in\Gamma \bigg\},
	\end{equation}
	\begin{equation}
	\mathscr{M}_{\psi}^{*}:=\bigg\{\mathbf{f}: \int_{\Gamma}{\langle\mathscr{K}_{\psi}(x-\xi) ,[\nu_{\psi}(\xi), \mathbf{f}(\xi)]\rangle}dS_{\xi}=0,  x\in\Gamma \bigg\}.
	\end{equation}
The set $\mathscr{M}_{\psi}^{*}$ can be described in purely physical terms (see \cite{20}).
	
After this brief introduction let us give a description of the structure of the paper. Section 2 presents some preliminaries on the $\psi$-hyperholomorphic function theory. Section 3 is devoted to the study the Cauchy transform ${^{\psi^{\theta}}}\mathbf{K}_{\Gamma}[\mathbf{f}]$. Section 4 contains a pair of generalizations of the results presented in \cite[Theorem 3.3 and 3.4]{2}. In Section 5 our main results are stated and proved.
	
\section{Preliminaries}
\subsection{Basics of $\psi$-hyperholomorphic functions theory}
Let $\mathbb{H}:=\mathbb{H(\mathbb{R})}$ and $\mathbb{H(\mathbb{C})}$ denote the sets of real and complex quaternions respectively. If $a\in\mathbb{H}$ or $a\in\mathbb{H(\mathbb{C})}$, then $a=a_{0}+a_{1}\textbf{i}+a_{2}\textbf{j}+a_{3}\textbf{k}$, where the coefficients $a_{k}\in\mathbb{R}$ if $a\in\mathbb{H}$ and  $a_{k}\in\mathbb{(\mathbb{C})}$ if $a\in\mathbb{H(\mathbb{C})}$. The symbols
$\textbf{i}$, $\textbf{j}$ and $\textbf{k}$ denote different imaginary units, i.e. $\textbf{i}^{2}=\textbf{j}^{2}=\textbf{k}^{2}=-1$ and they satisfy the following multiplication rules $\textbf{i}\textbf{j}=-\textbf{j}\textbf{i}=\textbf{k}$; $\textbf{j}\textbf{k}=-\textbf{k}\textbf{j}=\textbf{i}$; $\textbf{k}\textbf{i}=-\textbf{i}\textbf{k}=\textbf{j}$. The (complex) imaginary unit $i\in\mathbb{C}$ commutes with every cuaternionic unit imaginary. 
	
It is known that $\mathbb{H}$ is a skew-field and $\mathbb{H(\mathbb{C})}$ is an associative, non-conmmutative complex algebra with zero divisors.
	
If $a\in\mathbb{H}$ or $a\in\mathbb{H(\mathbb{C})}$, $a$ can be represented as $a=a_{0}+\vec{a}$, with $\vec{a}=a_{1}\textbf{i}+a_{2}\textbf{j}+a_{3}\textbf{k}$,
	$Sc(a):=a_{0}$ is called the scalar part and
	$Vec(a):=\vec{a}$ is called the vector part of the quaternion $a$. Also, if $a\in\mathbb{H(\mathbb{C})}$, $a$ can be represented as $a=\alpha_{1}+i\alpha_{2}$ with $\alpha_{1},\,\alpha_{2}\in\mathbb{H}$.
	
Let $a,\,b\in\mathbb{H(\mathbb{C})}$, the product between these quaternions can be calculated by the formula:
	\begin{equation} \label{pc2}
	ab=a_{0}b_{0}-\langle\vec{a},\vec{b}\rangle+a_{0}\vec{b}+b_{0}\vec{a}+[\vec{a},\vec{b}],
	\end{equation}
where
	\begin{equation} \label{proint}
	\langle\vec{a},\vec{b}\rangle:=\sum_{k=1}^{3} a_{k}b_{k}, \quad
	[\vec{a},\vec{b}]:= \left|\begin{matrix}
	\textbf{i} & \textbf{j} & \textbf{k}\\
	a_{1} & a_{2} & a_{3}\\
	b_{1} & b_{2} & b_{3}
	\end{matrix}\right|.
	\end{equation}
We define the conjugate of $a=a_{0}+\vec{a}\in\mathbb{H(\mathbb{C})}$ by $\overline{a}:=a_{0}-\vec{a}$.
	
The Euclidean norm of a quaternion $a\in\mathbb{H}$ is the number $\abs{a}$ 
	given by: 
	\begin{equation}\label{normar}
	\abs{a}=\sqrt{a\overline{a}}=\sqrt{\overline{a}a}.
	\end{equation}
We define the quaternionic norm of 
	$a\in\mathbb{H(\mathbb{C})} $ by:
	\begin{equation}
	\abs{a}_{c}:=\sqrt{{{\abs {a_{0}}}_{\mathbb{C}}}^{2}+{{\abs {a_{1}}}_{\mathbb{C}}}^{2}+{{\abs {a_{2}}}_{\mathbb{C}}}^{2}+{{\abs {a_{3}}}_{\mathbb{C}}}^{2}},
	\end{equation}
where ${\abs {a_{k}}}_{\mathbb{C}}$ denotes the complex norm of each component of the quaternion $a$. The norm of a complex quaternion $a=\alpha_{1}+i\alpha_{2}$ with $\alpha_{1}, \alpha_{2} \in \mathbb{H}$
can be rewritten in the form
	\begin{equation} \label{nc2}
	{\abs{a}_{c}}=\sqrt{\abs{\alpha_{1}}^2+\abs{\alpha_{2}}^2}.
	\end{equation}
If $a \in \mathbb{H}$, $b \in \mathbb{H(\mathbb{C})}$, then
	\begin{equation}
	{\abs{ab}}_{c}=\abs{a}{\abs{b}}_{c}.
\end{equation}
If $a\in\mathbb{H(\mathbb{C})}$ is not a zero divisor then $\displaystyle a^{-1}:=\frac{\overline{a}}{a\overline{a}}$ is the inverse of $a$.
\begin{subsection}{Notations} 
		\begin{itemize} 
			\item
A complex quaternionic valued function $f:\Omega\to\mathbb{H(\mathbb{C})}$ will be expressed as
			\begin{equation}
			f:=f_{0}+f_{1}\textbf{i}+f_{2}\textbf{j}+f_{3}\textbf{k}.
			\end{equation}
If $f_{0}\equiv 0$ in $\Omega$, $f$ will be called a vector field (written $f=\mathbf{f})$.
			\item We say that $f$ has properties in $\Omega$ such as continuity and real differentiability of order $p$ whenever all $f_{j}$  have these properties. These spaces are usually denoted by $C^{p}(\Omega,\, \mathbb{H(\mathbb{C})})$ with $p\in(\mathbb{N}\cup\{0\})$.
			\item Throughout this work, $Lip_{\mu}(\Omega,\, \mathbb{H(\mathbb{C})})$, $0<\mu\leq 1$,  denotes the set of H\"older continuous functions defined on $\Omega$ with values on $\mathbb{H(\mathbb{C})}$ and H\"older exponent $\mu$. 
		\end{itemize}
	
As defined in \cite{9}, consider on $C^{1}(\Omega,\, \mathbb{H(\mathbb{C})})$ an operator ${^\psi}D$ by the formula
		\begin{equation} \label{opi}
		{^\psi}D[f]:=\sum_{k=1}^{3}\psi^k\frac{\partial f}{\partial x_{k}},
		\end{equation}
where $\psi:=\{\psi^1,\psi^2,\psi^3 \}$, $\psi^k\in\mathbb{H}$. Denote $\overline{\psi}:=\{\overline{\psi^1},\overline{\psi^2},\overline{\psi^3} \}$. Then, the equality
		\begin{equation} \label{opl}
		{{^\psi}D}{^{\overline{\psi}}D}={^{\overline{\psi}}D}{^\psi}D=\Delta_{3},
		\end{equation}
		is true if and only if
		\begin{equation} \label{dce}
		\psi^j\overline{\psi}^k+\psi^k\overline{\psi}^j=2\delta_{jk},
		\end{equation}
for $j, k\in\{1, 2, 3\}$.
		
Any set of real quaternions $\psi:=\{\psi^1,\psi^2,\psi^3 \}$, $\psi^k\in\mathbb{H}$ with the property \eqref{dce} is called structural set.
		
Similarly (see \cite{9}), on $C^{1}(\Omega,\, \mathbb{H(\mathbb{C})})$ is defined an operator $D^\psi$ 
		\begin{equation}
		D^\psi[f]:=\sum_{k=1}^{3}\frac{\partial f}{\partial x_{k}} \psi^k.
		\end{equation}
		
    \begin{defi} \cite{9}. \label{dfhi} We say that $f\in C^{1}(\Omega, \, \mathbb{H(\mathbb{C})})$ is left (right)-$\psi$-hyperholomorphic in $\Omega$ if ${^\psi}D[f](x)=0$ ($D^\psi[f](x)=0$) for all $x\in\Omega$.
		\end{defi}
It is known that a quaternionic valued function can be left-$\psi$-hyperholorphic but no right-$\psi$-hyperholorphic and vice-versa; or can be left-right-$\psi$-hyperholomorphic. 

The function 
		\begin{equation} \label{kernel}
		\mathscr{K}_{\psi}(x):=\frac{1}{4\pi}\frac{(x)_{\psi}}{\abs{x}^3}, \quad x\in\mathbb{R}^{3}\setminus\{0\},
		\end{equation}
where
		\begin{equation}
		(x)_{\psi}:=\sum_{k=1}^{3}x_{k}\psi^{k},
		\end{equation}
is a left-right-$\psi$-hyperholomorphic fundamental solution of $^{\psi}D$ in $x \in \mathbb{R}^{3} \setminus \{0\}$. Observe that $\abs{(x)_{\psi}}=\abs{x}$ for all $ x \in \mathbb{R}^{3}$.		
\end{subsection}
	
Consider the special case of the structural set $\psi:=\psi^\theta:=\{\textbf{i},\,  \textbf{i}e^{\textbf{i}\theta}\textbf{j},\, e^{\textbf{i}\theta}\textbf{j}\}$, for $0\leqslant\theta\leqslant2\pi$ fixed, then the operator 
	${^{\psi^\theta}}D$ 
takes the form
	\begin{equation}
	{^{\psi^{\theta}}}D:=\frac{\partial}{\partial x_{1}}\textbf{i}+\frac{\partial}{\partial x_{2}}\textbf{i}e^{\textbf{i}\theta}\textbf{j}+\frac{\partial}{\partial x_{3}} e^{\textbf{i}\theta}\textbf{j}. 
	\end{equation}
We define the following partial operators for $f\in C^1(\Omega,\mathbb{H(\mathbb{C})})$:
	\begin{equation}
	{^{\psi^{\theta}}}\text{div}[\vec{f}]:=\frac{\partial f_{1}}{\partial x_{1}}+\left({\frac{\partial f_{2}}{\partial x_{2}}-\frac{\partial f_{3}}{\partial x_{3}}}\right)\textbf{i}e^{\textbf{i}\theta},
	\end{equation}
	\begin{equation}
	{^{\psi^{\theta}}}\text{grad}[f_{0}]:=\frac{\partial f_{0}}{\partial x_{1}}\textbf{i}+\frac{\partial f_{0}}{\partial x_{2}}\textbf{i}e^{\textbf{i}\theta}\textbf{j}+\frac{\partial f_{0}}{\partial x_{3}}e^{\textbf{i}\theta}\textbf{j},
	\end{equation}
	\begin{equation}
	\begin{split}
	{^{\psi^{\theta}}}\text{rot}[\vec{f}]:=\left({-\frac{\partial f_{3}}{\partial x_{2}}-\frac{\partial f_{2}}{\partial x_{3}}}\right)e^{\textbf{i}\theta}+\left({-\frac{\partial f_{1}}{\partial x_{3}}\textbf{i}e^{\textbf{i}\theta}-\frac{\partial f_{3}}{\partial x_{1}}}\right)\textbf{j} +\left({\frac{\partial f_{2}}{\partial x_{1}}-\frac{\partial f_{1}}{\partial x_{2}}\textbf{i}e^{\textbf{i}\theta}}\right)\textbf{k}.
	\end{split}
	\end{equation}
If we apply	${^{\psi^{\theta}}}D$ to $f\in C^1(\Omega, \, \mathbb{H(\mathbb{C})})$ we obtain
	\begin{equation}
	{^{\psi^{\theta}}}D[f]=-{^{\psi^{\theta}}}\text{div}[\vec{f}]+{^{\psi^{\theta}}}\text{grad}[f_{0}]+	{^{\psi^{\theta}}}\text{rot}[\vec{f}],
	\end{equation}
which implies that ${^{\psi^{\theta}}}D[f]=0$ is equivalent to
	\begin{equation} \label{eq1}
	-{^{\psi^{\theta}}}\text{div}[\vec{f}]+{^{\psi^{\theta}}}\text{grad}[f_{0}]+	{^{\psi^{\theta}}}\text{rot}[\vec{f}]=0.
	\end{equation}
For $f_{0}=0$, \eqref{eq1} is reduced to 
	\begin{equation} \label{eq2}
	-{^{\psi^{\theta}}}\text{div}[\vec{f}]+{^{\psi^{\theta}}}\text{rot}[\vec{f}]=0.
	\end{equation}
The equation \eqref{eq2}  is equivalent to the system  \eqref{sed}.
	
On the other hand, if we apply $D^{\psi^{\theta}}$ to $f\in C^{1}(\Omega, \, \mathbb{H(\mathbb{C})})$, we get
	\begin{equation} \label{eq3}
	D^{\psi^{\theta}}[f]=-{^{\overline{\psi^{\theta}}}}\text{div}[\vec{f}]+{^{\psi^{\theta}}}\text{grad}[f_{0}]+{^{\overline{\psi^{\theta}}}}\text{rot}[\vec{f}],
	\end{equation}
where
	\begin{equation}
	{^{\overline{\psi^{\theta}}}}\text{div}[\vec{f}]:=\frac{\partial f_{1}}{\partial x_{1}}+\left({\frac{\partial f_{2}}{\partial x_{2}}-\frac{\partial f_{3}}{\partial x_{3}}}\right)\overline{\textbf{i}e^{\textbf{i}\theta}},
	\end{equation}
	\begin{equation}
	\begin{split}
	{^{\overline{\psi^{\theta}}}}\text{rot}[\vec{f}]:=\left({-\frac{\partial f_{3} }{\partial x_{2}}-\frac{\partial f_{2}}{\partial x_{3}}}\right) \overline{e^{\textbf{i}\theta}}-{\frac{\partial f_{1}}{\partial x_{3}}\overline{\textbf{i}e^{\textbf{i}\theta}\textbf{j}}+\frac{\partial f_{3}}{\partial x_{1}}}\textbf{j} -\frac{\partial f_{2}}{\partial x_{1}}\textbf{k}-\frac{\partial f_{1}}{\partial x_{2}}\overline{\textbf{i}e^{\textbf{i}\theta}\textbf{k}}.
	\end{split}
	\end{equation}
If $f_{0}=0$, \eqref{eq3} is reduced to
	\begin{equation} \label{eq4}
	D^{\psi^{\theta}}[f]=-{^{\overline{\psi^{\theta}}}}\text{div}[\vec{f}]+{^{\overline{\psi^{\theta}}}}\text{rot}[\vec{f}].
	\end{equation}	
Then, the equation
\begin{equation} \label{eq5}
-{^{\overline{\psi^{\theta}}}}\text{div}[\vec{f}]+{^{\overline{\psi^{\theta}}}}\text{rot}[\vec{f}]=0.
\end{equation}	
is equivalent to the system of equations \eqref{sed}.
	\begin{defi} \label{dcvl} A continuously differentiable vector field $\mathbf{f}$ defined in $\Omega$ is said to be a $\psi^\theta$-Laplacian vector field if $\mathbf{f}$ is left-right-$\psi^\theta$-hyperholomorphic in $\Omega$.
	\end{defi}
The following Lemma will be used in the proof of our main result.
	\begin{lemma} \label{two-sided} Let $f=f_{0}+\vec{f}\in C^{1}(\Omega, \, \mathbb{H(\mathbb{C})})$. Then $f$ is left-right-$\psi^\theta$-hyperholomorphic in $\Omega$ if and only if ${^{\psi^{\theta}}}\text{grad}[f_{0}](x)=0$, for all $x\in\Omega$ and $\mathbf{f}:=\vec{f}$ is a $\psi^\theta$-Laplacian vector field in $\Omega$.
		\begin{proof} 
The proof is based on the fact that \eqref{eq2} and \eqref{eq5} are equivalent to \eqref{sed}.
		\end{proof}
	\end{lemma}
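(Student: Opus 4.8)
The plan is to reduce the two-sided $\psi^\theta$-hyperholomorphy of $f=f_{0}+\vec f$ to separate conditions on the scalar part $f_{0}$ and the vector part $\vec f$, by combining the $\mathbb{R}$-linearity of the operators ${^{\psi^{\theta}}}D$ and $D^{\psi^{\theta}}$ with the scalar/vector splitting of $\mathbb{H(\mathbb{C})}$. Since each $\psi^k$ is a purely vectorial real quaternion and the complex scalars $\frac{\partial f_{0}}{\partial x_{k}}$ commute with it (the complex unit $i$ commutes with every quaternionic unit), distributing either operator over $f$ gives the identities
\[{^{\psi^{\theta}}}D[f]={^{\psi^{\theta}}}\text{grad}[f_{0}]+{^{\psi^{\theta}}}D[\vec f],\qquad D^{\psi^{\theta}}[f]={^{\psi^{\theta}}}\text{grad}[f_{0}]+D^{\psi^{\theta}}[\vec f].\]
The first thing I would record is that ${^{\psi^{\theta}}}\text{grad}[f_{0}]$ is purely vectorial, which follows at once because every $\psi^k$ is; this is exactly what makes the separation possible.

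For the forward implication I would add and subtract the two identities. Using \eqref{pc2}, for pure vectors $\vec a,\vec b$ one has $\vec a\vec b+\vec b\vec a=-2\langle\vec a,\vec b\rangle$ and $\vec a\vec b-\vec b\vec a=2[\vec a,\vec b]$. Applied termwise to $\sum_k\psi^k\frac{\partial \vec f}{\partial x_{k}}$ and $\sum_k\frac{\partial \vec f}{\partial x_{k}}\psi^k$, this shows that ${^{\psi^{\theta}}}D[\vec f]+D^{\psi^{\theta}}[\vec f]=2\,Sc\big({^{\psi^{\theta}}}D[\vec f]\big)$ is a pure scalar while ${^{\psi^{\theta}}}D[\vec f]-D^{\psi^{\theta}}[\vec f]=2\,Vec\big({^{\psi^{\theta}}}D[\vec f]\big)$ is a pure vector. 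Adding the two operator identities therefore produces a pure scalar together with the purely vectorial term $2\,{^{\psi^{\theta}}}\text{grad}[f_{0}]$, whereas subtracting them cancels the grad term and leaves $2\,Vec\big({^{\psi^{\theta}}}D[\vec f]\big)$. If $f$ is left-right-$\psi^\theta$-hyperholomorphic both combinations vanish; reading off scalar and vector parts yields ${^{\psi^{\theta}}}\text{grad}[f_{0}]=0$, $Sc\big({^{\psi^{\theta}}}D[\vec f]\big)=0$ and $Vec\big({^{\psi^{\theta}}}D[\vec f]\big)=0$. Hence ${^{\psi^{\theta}}}D[\vec f]=0$, and since the scalar (resp.\ vector) part of $D^{\psi^{\theta}}[\vec f]$ equals (resp.\ is the negative of) that of ${^{\psi^{\theta}}}D[\vec f]$, also $D^{\psi^{\theta}}[\vec f]=0$; that is, $\mathbf f:=\vec f$ is a $\psi^\theta$-Laplacian vector field.

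The converse is immediate: if ${^{\psi^{\theta}}}\text{grad}[f_{0}]=0$ and $\vec f$ satisfies ${^{\psi^{\theta}}}D[\vec f]=D^{\psi^{\theta}}[\vec f]=0$, the two displayed identities give ${^{\psi^{\theta}}}D[f]=D^{\psi^{\theta}}[f]=0$ directly. Staying within the paper's notation, one can phrase the same argument through \eqref{eq1} and \eqref{eq3}, invoking that \eqref{eq2} and \eqref{eq5} are each equivalent to system \eqref{sed}, so that for a vector field the conditions ${^{\psi^{\theta}}}D[\vec f]=0$ and $D^{\psi^{\theta}}[\vec f]=0$ coincide and the bookkeeping above is reproduced.

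The only delicate point is the separation step itself. One must be sure that ${^{\psi^{\theta}}}\text{grad}[f_{0}]$ leaks nothing into the scalar part and that the mixed factors $\textbf{i}e^{\textbf{i}\theta}$ and $e^{\textbf{i}\theta}$ hidden inside ${^{\psi^{\theta}}}\text{div}$ and ${^{\psi^{\theta}}}\text{rot}$ (which, unlike the classical case, carry both scalar and vectorial components) do not spoil the add/subtract trick. Working directly with the raw expressions $\sum_k\psi^k\frac{\partial \vec f}{\partial x_{k}}$ and $\sum_k\frac{\partial \vec f}{\partial x_{k}}\psi^k$, rather than with the div/grad/rot repackaging, sidesteps this issue cleanly, which is why I would carry out the computation at that level.
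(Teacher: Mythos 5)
Your proof is correct. The paper's own proof is a one-line appeal to the fact that \eqref{eq2} and \eqref{eq5} are each equivalent to \eqref{sed} --- i.e., that for a vector field the conditions ${^{\psi^{\theta}}}D[\vec f]=0$ and $D^{\psi^{\theta}}[\vec f]=0$ coincide --- and it leaves implicit exactly the point you flag as delicate: why ${^{\psi^{\theta}}}\text{grad}[f_{0}]$ decouples from the remaining terms when, for this structural set, ${^{\psi^{\theta}}}\text{div}$ and ${^{\psi^{\theta}}}\text{rot}$ are not purely scalar and purely vectorial. Your add/subtract computation carried out directly on $\sum_{k}\psi^{k}\frac{\partial \vec f}{\partial x_{k}}$ and $\sum_{k}\frac{\partial \vec f}{\partial x_{k}}\psi^{k}$, using that each $\psi^{k}$ is a purely vectorial real quaternion commuting with complex scalars, supplies that missing step and in fact yields the sharper pointwise identities $Sc(D^{\psi^{\theta}}[\vec f])=Sc({^{\psi^{\theta}}}D[\vec f])$ and $Vec(D^{\psi^{\theta}}[\vec f])=-Vec({^{\psi^{\theta}}}D[\vec f])$, from which both directions of the equivalence follow immediately. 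So the underlying idea is the same as the paper's, but your version is the complete argument rather than a pointer to one.
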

\subsection{Whitney's extension theorem}
The following results are generalizations of those presented in \cite[Theorem 4.1, Proposition 4.1]{17}, which is due to the fact that the proofs make no appeal to which structural set is assumed. 
	\begin{theorem} \label{inciso3}
Let  ${f}\in Lip_{\mu}(\Gamma, \mathbb{H(\mathbb{R})})$. Define $f^{w}:=\mathcal{X}\mathcal{E}_{0}(f)$, where $\mathcal{X}$ is the characteristic function in $\Omega\cup\Gamma$ and $\mathcal{E}_{0}(f)$ is the Whitney operator (see \cite{29}) applied to $f$. Then 
		\begin{itemize}
			\item [a)] $f^{w}\in Lip_{\mu}(\Omega\cup\Gamma, \mathbb{H(\mathbb{R})})$.
			\item [b)] $\absol{\displaystyle{\frac{\partial f^{w}}{\partial x_{i}}}(x)}\leq c (dist(x,\Gamma))^{\mu-1}$.
			\item [c)] $\abs{{^{\psi^{\theta}}D}f^{w}(x)}\leq c_{1} (dist(x,\Gamma))^{\mu-1}$,
		\end{itemize}
where $c$, $c_{1}$ are constants.
	\end{theorem}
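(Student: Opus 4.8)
The plan is to dispose of the three assertions in turn, noting at the outset that the Whitney operator $\mathcal{E}_{0}$ is built purely from the geometry of the closed set $\Gamma$ and is completely blind to the structural set. Consequently items a) and b) are exactly the content of \cite[Theorem 4.1]{17} and would be obtained by transcribing its proof, whereas only c) involves ${^{\psi^{\theta}}}D$ and will be deduced from b). Since $\mathcal{X}\equiv 1$ on $\Omega\cup\Gamma$, throughout one has $f^{w}=\mathcal{E}_{0}(f)$ on $\Omega\cup\Gamma$, so every estimate for $f^{w}$ on that set is an estimate for $\mathcal{E}_{0}(f)$.

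For a) and b) I would first recall the construction of $\mathcal{E}_{0}(f)$: decompose $\mathbb{R}^{3}\setminus\Gamma$ into a Whitney family of dyadic cubes $\{Q_{k}\}$ with $diam(Q_{k})\leq dist(Q_{k},\Gamma)\leq 4\,diam(Q_{k})$, take an associated smooth partition of unity $\{\varphi_{k}\}$ obeying $\abs{\partial_{i}\varphi_{k}}\leq c\,(diam\,Q_{k})^{-1}$, pick a nearest point $p_{k}\in\Gamma$ to each cube, and set $\mathcal{E}_{0}(f)=\sum_{k}\varphi_{k}\,f(p_{k})$ off $\Gamma$ and $\mathcal{E}_{0}(f)=f$ on $\Gamma$. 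Item a) is then the classical statement that $\mathcal{E}_{0}$ preserves the H\"older class $Lip_{\mu}$, proved by the usual case analysis (both points deep in $\Omega$, both near $\Gamma$, or one of each). For b), fixing $x\in\Omega$ and a nearest boundary point $p_{0}$, only finitely many $\varphi_{k}$ are nonzero near $x$ and $\sum_{k}\partial_{i}\varphi_{k}\equiv 0$; hence $\partial_{i}\mathcal{E}_{0}(f)(x)=\sum_{k}\partial_{i}\varphi_{k}(x)\big(f(p_{k})-f(p_{0})\big)$, and combining $\abs{f(p_{k})-f(p_{0})}\leq c\,\abs{p_{k}-p_{0}}^{\mu}\leq c\,(dist(x,\Gamma))^{\mu}$ with $\abs{\partial_{i}\varphi_{k}(x)}\leq c\,(dist(x,\Gamma))^{-1}$ gives the asserted bound.

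Item c) is then immediate. By definition ${^{\psi^{\theta}}}D[f^{w}]=\sum_{k=1}^{3}\psi^{k}\,\partial f^{w}/\partial x_{k}$, and each $\psi^{k}\in\mathbb{H}$ is a unit real quaternion, $\abs{\psi^{k}}=1$. Since $f$ is $\mathbb{H(\mathbb{R})}$-valued the multiplicativity $\abs{\psi^{k}\,\partial_{k}f^{w}}=\abs{\psi^{k}}\abs{\partial_{k}f^{w}}=\abs{\partial_{k}f^{w}}$ holds, so the triangle inequality together with b) yields $\abs{{^{\psi^{\theta}}}D[f^{w}](x)}\leq\sum_{k=1}^{3}\abs{\partial_{k}f^{w}(x)}\leq 3c\,(dist(x,\Gamma))^{\mu-1}$, and we take $c_{1}:=3c$.

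I expect the only real work to sit in b): the passage from the H\"older modulus of $f$ to the $(dist(x,\Gamma))^{\mu-1}$ blow-up rests on the cancellation $\sum_{k}\partial_{i}\varphi_{k}\equiv 0$ and on the comparability $diam(Q_{k})\approx dist(Q_{k},\Gamma)$ near the merely rectifiable surface $\Gamma$. These are intrinsic features of the Whitney decomposition of an arbitrary closed set and are unaffected by the choice of structural set, which is precisely why c) costs nothing beyond b) and why the argument of \cite{17} transfers without change.
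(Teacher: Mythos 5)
Your proposal is correct and matches the paper's treatment: the paper offers no independent proof, merely observing that the statements are those of \cite[Theorem 4.1]{17} and that the arguments there make no appeal to the choice of structural set, which is exactly the content of your reduction of a) and b) to the classical Whitney construction and of c) to b) via $\abs{\psi^{k}}=1$ and the triangle inequality. You have in fact supplied more detail than the paper does.
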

	\begin{proposition} \label{lemmalp} 
Let $f \in Lip_{\mu}(\Gamma , \mathbb{ H(\mathbb{R}) })$. Then $^{\psi^{\theta}}D[f^{w}] \in L_{p}(\mathbb{R}^{3}, \mathbb{H(\mathbb{R})})$ for $p < \displaystyle \frac{1}{1-\mu}$.
\end{proposition}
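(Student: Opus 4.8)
The plan is to reduce the global $L_{p}$ statement to a one-dimensional integrability question that is governed entirely by the pointwise bound in part (c) of Theorem \ref{inciso3}. First I would observe that, by construction, $f^{w}=\mathcal{X}\mathcal{E}_{0}(f)$ vanishes identically on the open set $\Omega_{-}=\mathbb{R}^{3}\setminus\{\Omega\cup\Gamma\}$; hence its classical derivatives, and therefore ${^{\psi^{\theta}}D}[f^{w}]$, vanish there as well, while on $\Omega$ the Whitney extension $\mathcal{E}_{0}(f)$ is smooth so that ${^{\psi^{\theta}}D}[f^{w}]$ is genuinely defined. Since $\Gamma$ has three-dimensional Lebesgue measure zero, this gives
\begin{equation}
\int_{\mathbb{R}^{3}}\abs{{^{\psi^{\theta}}D}[f^{w}](x)}^{p}\,dx=\int_{\Omega}\abs{{^{\psi^{\theta}}D}[f^{w}](x)}^{p}\,dx,
\end{equation}
so it suffices to prove that the right-hand integral is finite.

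Next I would insert estimate (c) of Theorem \ref{inciso3}, namely $\abs{{^{\psi^{\theta}}D}[f^{w}](x)}\leq c_{1}(dist(x,\Gamma))^{\mu-1}$. Raising to the $p$-th power reduces the problem to showing
\begin{equation}
\int_{\Omega}(dist(x,\Gamma))^{(\mu-1)p}\,dx<\infty .
\end{equation}
Writing $\alpha:=(\mu-1)p$ and using the hypothesis $p<\frac{1}{1-\mu}$, one has $(1-\mu)p<1$, that is $\alpha=-(1-\mu)p>-1$; note also $\alpha\leq 0$. Thus the statement is equivalent to the sharp integrability $\int_{\Omega}(dist(x,\Gamma))^{\alpha}\,dx<\infty$ for $-1<\alpha\leq 0$, and the threshold $p=\frac{1}{1-\mu}$ is precisely the value at which $\alpha=-1$ and the integral would diverge.

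To establish this integrability I would exploit the geometry of $\Gamma$ through a dyadic decomposition of the boundary collar. Set $d(x):=dist(x,\Gamma)$ and $E_{k}:=\{x\in\Omega: 2^{-k-1}\leq d(x)<2^{-k}\}$. The decisive geometric input is that, because $\Gamma$ is the image of a bounded planar set under a Lipschitz map, it has finite two-dimensional measure and finite upper Minkowski content; consequently the strip $\{x\in\Omega: d(x)<t\}$ has Lebesgue measure at most $Ct$ for all small $t$, whence $\abs{E_{k}}\leq C2^{-k}$. Since $\alpha\leq 0$, on $E_{k}$ one has $d(x)^{\alpha}\leq(2^{-k-1})^{\alpha}$, so, splitting off the part bounded away from $\Gamma$,
\begin{equation}
\int_{\Omega}d(x)^{\alpha}\,dx\leq \sum_{k\geq k_{0}}(2^{-k-1})^{\alpha}\abs{E_{k}}+\int_{\{d\geq 2^{-k_{0}}\}}d(x)^{\alpha}\,dx\lesssim \sum_{k\geq k_{0}}2^{-k(1+\alpha)}+C,
\end{equation}
and the geometric series converges precisely because $1+\alpha>0$; the bounded-away piece is finite since $\Omega$ is bounded and the integrand is bounded there.

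The main obstacle is the geometric collar estimate $\abs{\{x\in\Omega: d(x)<t\}}\leq Ct$, which is where rectifiability is indispensable: for a merely rectifiable (rather than smooth) boundary the linear-in-$t$ growth of the tubular neighborhood is not automatic and must be drawn from finiteness of the upper Minkowski content of $\Gamma$, or, alternatively, by covering $\Gamma$ with finitely many Lipschitz-graph patches and estimating the volume of the tube over each patch directly. Everything else — the reduction to $\Omega$, the substitution of the pointwise bound, and the summation of the dyadic series — is routine once this collar estimate is in hand.
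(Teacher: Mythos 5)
Your argument is correct and is essentially the proof the paper relies on: the paper gives no details here, only the remark that Proposition 4.1 of the cited work of Abreu-Blaya and Bory-Reyes carries over verbatim to an arbitrary structural set, and that proof is precisely your reduction to the finiteness of $\int_{\Omega}\mathrm{dist}(x,\Gamma)^{(\mu-1)p}\,dx$ via Theorem \ref{inciso3}(c) followed by a dyadic decomposition of the boundary collar. The one step worth making explicit is the passage from ``Lipschitz image of a bounded planar set'' to ``finite upper Minkowski content'': for general sets finite $\mathcal{H}^{2}$-measure does not control Minkowski content, and what is needed is Federer's theorem that $\mathcal{M}^{2}(\Gamma)=\mathcal{H}^{2}(\Gamma)$ for closed $2$-rectifiable sets, which applies since $\Gamma$ is compact; you correctly isolate this as the decisive geometric input.
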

\subsection{Integral operators for $\psi^\theta$-hyperholomorphic function theory}
Now we enunciate the most essential integral formulas for the theory of $\psi^\theta$ - hyperholomorphic functions, starting with the quaternionic Stokes' formula (see \cite{9}), which is a consequence of the Stokes' theorem in real analysis.
	
Let $f,g\in C^{1}(\Omega,\, \mathbb{H(\mathbb{C})})\cap C(\Omega\cup\Gamma,\, \mathbb{H(\mathbb{C})})$. Then
	\begin{equation}	
	\int_{\Gamma}{g(\xi){\nu_{\psi^{\theta}}}(\xi) f(\xi)}dS_{\xi}=\int_{\Omega}[D^{\psi^{\theta}}[g](\xi) f(\xi)+g(\xi) {^{\psi^{\theta}}}D[f](\xi)]dm({\xi}).
	\end{equation}
	\begin{defi} \cite{9}. Let $f$ be a continuous function defined on $\Omega\cup\Gamma$. The quaternionic Cauchy and Teodorescu transforms of $f$ are defined by
		\begin{equation}
		{^{\psi^{\theta}}}T[f](x):=\int_{\Omega}{\mathscr{K}_{\psi^{\theta}}(x-\xi) f(\xi) }dm(\xi),  \quad x\in \mathbb{R}^{3},
		\end{equation}
		\begin{equation}
		{^{\psi^{\theta}}}K_{\Gamma}[f](x):=-\int_{\Gamma}{\mathscr{K}_{\psi^{\theta}}(x-\xi)\nu_{\psi^{\theta}}(\xi) f(\xi)}dS_{\xi}, \quad x\in \mathbb{R}^{3}\setminus \Gamma.
		\end{equation}
The singular Cauchy transform is defined by
		\begin{equation}\label{singular}
		{^{\psi^{\theta}}}\mathscr{S}_{\Gamma}[f](t):=2{^{\psi^{\theta}}}\Phi[f](t)+f(t), \quad t\in  \Gamma,
		\end{equation}
where
		\begin{equation}
		{^{\psi^{\theta}}}\Phi[f](t):=\lim_{r \to 0} \int_{\Gamma\setminus\Gamma_{t,r}}{\mathscr{K}_{\psi^{\theta}}(t-\tau)\nu_{\psi^{\theta}}(\tau)(f(\tau)-f(t))}dS_{\tau}, \quad t\in  \Gamma,
		\end{equation}
with
		\begin{equation}
		\Gamma_{t,r}:=\{\xi \in \Gamma: \, \abs{t-\xi}\leq r\}.
		\end{equation}
	\end{defi}	
In a similar way, the operators $[f]{^{\psi^{\theta}}}K_{\Gamma}$ and $[f]{^{\psi^{\theta}}}\mathscr{S}_{\Gamma}$ are defined  (the quaternionic Cauchy kernel appears on the right side of the integral and the normal vector is placed between the function $f$ and the kernel $\mathscr{K}_{\psi^{\theta}}$. It is worth pointing out that meanwhile ${^{\psi^{\theta}}}K_{\Gamma}[f]$ is left-$\psi^{\theta}$-hyperholomorphic, $[f]{^{\psi^{\theta}}}K_{\Gamma}$ is right-$\psi^{\theta}$-hyperholomorphic.
	
If $f$ $\in$ $C^{1}(\Omega, \, \mathbb{H(\mathbb{C})})\cap C(\Omega\cup\Gamma,\, \mathbb{H(\mathbb{C})})$, the Borel Pompieu formula is true
	\begin{equation}\label{fbpc}
	{^{\psi^{\theta}}}K_{\Gamma}[f](x)+{^{\psi^{\theta}}}T\big[{^{\psi^{\theta}}}D[f]\big](x)=\left\lbrace
	\begin{array}{ll}
	f(x) & \text{if} \, x\in\Omega_{+},\\
	0 &  \text{if} \, x\in\Omega_{-}.
	\end{array}
	\right..
	\end{equation}
Under the above assumptions, if moreover $f$ is left-$\psi^{\theta}$-hyperholomorphic in $\Omega$, from \eqref{fbpc} we get the Cauchy integral formula  
	\begin{equation} \label{ficc}
	{^{\psi^{\theta}}}K_{\Gamma}[f](x)=f(x), \quad x\in \Omega.
	\end{equation}
\begin{theorem}
Let $f\in C^{1}(\Omega,\, \mathbb{H(\mathbb{C})})\cap C(\Omega\cup\Gamma,\, \mathbb{H(\mathbb{C})})$. Then
	\begin{equation}
	{{^{\psi^{\theta}}}D}[{{^{\psi^{\theta}}}T}[f]](x):=\left\lbrace
	\begin{array}{ll}
	f(x) & \text{if} \, x\in\Omega_{+},\\
	0 &  \text{if} \, x\in \Omega_{-}.
	\end{array}
	\right.
	\end{equation}
	\end{theorem}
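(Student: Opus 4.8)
The plan is to reduce the whole statement to the single fact, already recorded in the excerpt, that $\mathscr{K}_{\psi^{\theta}}$ is the left-right-$\psi^{\theta}$-hyperholomorphic fundamental solution of ${^{\psi^{\theta}}}D$, and to treat the two regions $\Omega_{+}$ and $\Omega_{-}$ separately. The exterior region is immediate and the interior region carries all the analytic weight.

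First I would dispose of the case $x\in\Omega_{-}$. Here $x\notin\overline{\Omega}$, so the map $\xi\mapsto\mathscr{K}_{\psi^{\theta}}(x-\xi)$ is smooth on a neighbourhood of $\overline{\Omega}$ and the integrand defining ${^{\psi^{\theta}}}T[f]$ has no singularity on the domain of integration. Differentiation under the integral sign is therefore legitimate, and since $\mathscr{K}_{\psi^{\theta}}$ is left-$\psi^{\theta}$-hyperholomorphic on $\mathbb{R}^{3}\setminus\{0\}$ while $x-\xi\neq 0$, one gets ${^{\psi^{\theta}}}D[{^{\psi^{\theta}}}T[f]](x)=\int_{\Omega}{^{\psi^{\theta}}}D_{x}[\mathscr{K}_{\psi^{\theta}}(x-\xi)]\,f(\xi)\,dm(\xi)=0$.

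For $x\in\Omega_{+}$ I would fix the base point $x$ and split $f$ around its value there, writing for $x'$ in a small ball about $x$ contained in $\Omega$
\[
{^{\psi^{\theta}}}T[f](x')=\int_{\Omega}\mathscr{K}_{\psi^{\theta}}(x'-\xi)\,(f(\xi)-f(x))\,dm(\xi)+f(x)\int_{\Omega}\mathscr{K}_{\psi^{\theta}}(x'-\xi)\,dm(\xi),
\]
and then apply ${^{\psi^{\theta}}}D$ in $x'$ and evaluate at $x'=x$. For the constant term the factor $f(x)$ is inert, and since ${^{\psi^{\theta}}}T[\mathbf{1}]=\mathscr{K}_{\psi^{\theta}}\ast\chi_{\Omega}$ the fundamental-solution property gives ${^{\psi^{\theta}}}D[{^{\psi^{\theta}}}T[\mathbf{1}]]=({^{\psi^{\theta}}}D\,\mathscr{K}_{\psi^{\theta}})\ast\chi_{\Omega}=\chi_{\Omega}$, which equals $1$ on $\Omega_{+}$ (this is classical there, the potential being smooth and the value constant; equivalently it is the sphere normalisation underlying the Cauchy integral formula \eqref{ficc}). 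For the subtracted term, the point is that $f\in C^{1}(\Omega)$ is locally Lipschitz, so $|f(\xi)-f(x)|\le C|\xi-x|$; together with the kernel bound $|\partial_{x}\mathscr{K}_{\psi^{\theta}}(x-\xi)|\le C|x-\xi|^{-3}$ (the same estimate underlying Theorem \ref{inciso3}) the differentiated integrand is controlled by $C|x-\xi|^{-2}$, which is integrable in $\mathbb{R}^{3}$. Hence the $x'$-derivative of the first term at $x'=x$ is represented by an absolutely convergent integral of ${^{\psi^{\theta}}}D\,\mathscr{K}_{\psi^{\theta}}(x-\xi)$ against $f(\xi)-f(x)$, and this vanishes by hyperholomorphicity of the kernel off the origin. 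Adding the two contributions yields ${^{\psi^{\theta}}}D[{^{\psi^{\theta}}}T[f]](x)=f(x)$.

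The hard part is precisely the rigorous meaning of differentiating the volume potential in the interior, since the differentiated kernel $\partial_{x}\mathscr{K}_{\psi^{\theta}}$ is strongly singular ($|x-\xi|^{-3}$, non-integrable on its own): one must interpret $\partial_{x_{k}}{^{\psi^{\theta}}}T[f](x)$ through the theory of Cauchy-type singular integral operators as a principal value plus a jump term, and verify that the subtraction of $f(x)$ is exactly what converts the principal-value part into an ordinary convergent integral centred at the evaluation point while the jump term is absorbed into the fundamental-solution contribution $f(x){^{\psi^{\theta}}}D[{^{\psi^{\theta}}}T[\mathbf{1}]]$. I would also take care with the orientation of the outward normal on a small sphere $\partial B_{\varepsilon}(x)$ in the Gauss-theorem normalisation. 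Finally I would note that merely applying ${^{\psi^{\theta}}}D$ to the Borel--Pompieu identity \eqref{fbpc} and using that ${^{\psi^{\theta}}}K_{\Gamma}[f]$ is hyperholomorphic only recovers the identity on functions of the form ${^{\psi^{\theta}}}D[g]$, so the direct argument above is genuinely required.
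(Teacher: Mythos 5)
The paper does not prove this theorem at all: it is quoted as a standard fact of $\psi$-hyperholomorphic function theory, namely that the Teodorescu transform is a right inverse of ${^{\psi^{\theta}}}D$ (cf.\ \cite{9} and \cite[Section 2.3]{10}, the source the authors cite for the neighbouring Theorem \ref{theodoresco}), so there is no in-paper argument to compare yours against. Your sketch is the classical proof of that fact and its architecture is right: the exterior case by differentiation under a nonsingular integral together with left-$\psi^{\theta}$-hyperholomorphy of $\mathscr{K}_{\psi^{\theta}}$ off the origin, and the interior case by splitting off $f(x)$, handling the constant via the fundamental-solution identity $({^{\psi^{\theta}}}D\,\mathscr{K}_{\psi^{\theta}})\ast\chi_{\Omega}=\chi_{\Omega}$, and killing the remainder via the pointwise vanishing of ${^{\psi^{\theta}}}D\,\mathscr{K}_{\psi^{\theta}}(x-\xi)$ for $\xi\neq x$.

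The one step you describe rather than prove is precisely the differentiation of the subtracted term, and as literally written it does not go through: for $x'\neq x$ the differentiated integrand $\partial_{x'_{k}}\mathscr{K}_{\psi^{\theta}}(x'-\xi)\,\bigl(f(\xi)-f(x)\bigr)$ still behaves like $|x'-\xi|^{-3}$ near $\xi=x'$, because the subtraction is centred at the fixed point $x$ and not at the moving point $x'$; absolute integrability holds only at the single point $x'=x$, so ``differentiate under the integral sign and evaluate at $x'=x$'' is not a legitimate operation. The correct execution is either to estimate the difference quotient $h^{-1}\bigl({^{\psi^{\theta}}}T[f](x+he_{k})-{^{\psi^{\theta}}}T[f](x)\bigr)$ directly, splitting $\Omega$ into $B_{2|h|}(x)$ and its complement and using the local Lipschitz bound on each piece, or to invoke the classical theorem on first derivatives of weakly singular volume potentials (the principal-value-plus-jump formula, valid for locally H\"older continuous densities), which is exactly the content of the results in \cite{10} that the paper leans on. You correctly name this as the hard part and point at the right machinery, but the proposal stops where the analysis begins. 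Two smaller points: the bound $|f(\xi)-f(x)|\leq C|\xi-x|$ is available only on compact subsets of $\Omega$ (near $\Gamma$ the function is merely continuous), so the splitting into a small interior ball plus a nonsingular remainder is needed in any case; and the identity ${^{\psi^{\theta}}}D\,{^{\psi^{\theta}}}T[\mathbf{1}]=1$ on $\Omega_{+}$, which you label ``classical,'' itself requires the same potential-theoretic lemma (or a Gauss-theorem computation over a punctured ball), so it cannot be used to discharge the difficulty it is part of.
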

	\begin{theorem} \label{theodoresco}
		Let $f\in L_{p}(\Omega,\,\mathbb{H(\mathbb{R})})$, for $p> 3$. Then ${^{\psi^{\theta}}}T[f]$ satisfy the following inequalities
			\begin{equation} \label{des1t}
			\abs{{^{\psi^{\theta}}}T[f](x)}\leq C_{1}(\Omega,\, p)\abs{\abs{f}}_{L_{p}},
			\end{equation}
			\begin{equation}
			\abs{{^{\psi^{\theta}}}T[f](x)-{^{\psi^{\theta}}}T[f](x^{\prime})}\leq C_{2}(\Omega,\, p)\abs{\abs{f}}_{L_{p}}\abs{x-x^{\prime}}^{\frac{p-3}{p}}, \quad x\neq x^{\prime},
			\end{equation}
where			
			\begin{equation}
			\abs{\abs{f}}_{L_{p}}:=\bigg(\int_{\Omega} \abs{f}^{p}(\xi) dm(\xi)\bigg)^{\frac{1}{p}}.
			\end{equation}
		\end{theorem}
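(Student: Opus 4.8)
The plan is to treat $^{\psi^{\theta}}T$ as a weakly singular integral operator and to run the classical potential-theoretic estimates, the only analytic input being the size of the kernel. First I would record that, since $\psi^{\theta}$ is a real structural set and $\abs{(y)_{\psi^{\theta}}}=\abs{y}$, the fundamental solution \eqref{kernel} satisfies
\[
\abs{\mathscr{K}_{\psi^{\theta}}(y)}=\frac{1}{4\pi}\,\frac{\abs{(y)_{\psi^{\theta}}}}{\abs{y}^{3}}=\frac{1}{4\pi\abs{y}^{2}},\qquad y\in\mathbb{R}^{3}\setminus\{0\},
\]
so the kernel is of order $\abs{y}^{-2}$. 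Moreover each component of $\mathscr{K}_{\psi^{\theta}}$ is positively homogeneous of degree $-2$ and smooth off the origin, whence its gradient is homogeneous of degree $-3$ and $\abs{\nabla\mathscr{K}_{\psi^{\theta}}(y)}\leq c\,\abs{y}^{-3}$.

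For the first inequality I would pass the norm under the integral sign using $\abs{ab}_{c}=\abs{a}\abs{b}_{c}$ (here all quantities are real-quaternionic), obtaining
\[
\abs{{^{\psi^{\theta}}}T[f](x)}\leq\frac{1}{4\pi}\int_{\Omega}\frac{\abs{f(\xi)}}{\abs{x-\xi}^{2}}\,dm(\xi),
\]
and then apply H\"older's inequality with conjugate exponent $q=\tfrac{p}{p-1}$. Enclosing $\Omega$ in a ball of radius $R$ centred at $x$, the kernel factor is controlled by $\int_{0}^{R}r^{2-2q}\,dr$, which converges precisely when $2q<3$, i.e. when $p>3$; this produces a constant $C_{1}(\Omega,p)$ and yields \eqref{des1t}.

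For the H\"older estimate I would write the difference as $\int_{\Omega}\big[\mathscr{K}_{\psi^{\theta}}(x-\xi)-\mathscr{K}_{\psi^{\theta}}(x'-\xi)\big]f(\xi)\,dm(\xi)$, pass to norms, and again use H\"older, so that the whole matter reduces to bounding $\big(\int_{\Omega}\abs{\mathscr{K}_{\psi^{\theta}}(x-\xi)-\mathscr{K}_{\psi^{\theta}}(x'-\xi)}^{q}\,dm(\xi)\big)^{1/q}$ by a multiple of $\abs{x-x'}^{(p-3)/p}$. Writing $h:=\abs{x-x'}$, I would split $\Omega$ into the near zone $\{\xi:\abs{x-\xi}<2h\}$ and its complement. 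On the near zone I use the crude bound $\abs{\mathscr{K}_{\psi^{\theta}}(x-\xi)}+\abs{\mathscr{K}_{\psi^{\theta}}(x'-\xi)}$ and integrate $\abs{\cdot}^{-2q}$ over balls of radius $\sim h$, getting $h^{(3-2q)/q}=h^{(p-3)/p}$. On the far zone I use the mean value inequality together with $\abs{\nabla\mathscr{K}_{\psi^{\theta}}}\leq c\abs{\cdot}^{-3}$; since $\abs{x-\xi}\geq 2h$ forces $\abs{z-\xi}\geq\tfrac12\abs{x-\xi}$ for every $z$ on the segment $[x,x']$, this gives $\abs{\mathscr{K}_{\psi^{\theta}}(x-\xi)-\mathscr{K}_{\psi^{\theta}}(x'-\xi)}\leq c\,h\,\abs{x-\xi}^{-3}$, and $h\big(\int_{\abs{x-\xi}\geq 2h}\abs{x-\xi}^{-3q}\,dm\big)^{1/q}$ likewise equals a multiple of $h^{\,1+(3-3q)/q}=h^{(p-3)/p}$. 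Adding the two contributions produces the second asserted inequality with constant $C_{2}(\Omega,p)$.

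The main obstacle is the far-zone estimate: one must verify that along the whole segment joining $x$ and $x'$ the argument of the kernel stays comparable to $\abs{x-\xi}$ (so that the degree $-3$ homogeneity of $\nabla\mathscr{K}_{\psi^{\theta}}$ can be invoked), and then check that the exponent bookkeeping---combining the extra factor $h$ with the divergent integral $\int_{2h}^{R}r^{2-3q}\,dr\sim h^{3-3q}$---collapses exactly to $(p-3)/p$, matching the near-zone exponent. Everything else is routine once the condition $p>3$, equivalently $1<q<\tfrac32$, is in force, which is exactly what simultaneously guarantees the convergence of the weakly singular integral and the positivity of the resulting H\"older exponent.
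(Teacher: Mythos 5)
Your argument is correct, and it is essentially the paper's own approach written out in full: the paper simply invokes \cite[Theorem 2.3.2]{10} together with the observation $\abs{(x)_{\psi^{\theta}}}=\abs{x}$ (so the kernel has the same magnitude $\frac{1}{4\pi\abs{y}^{2}}$ as in the standard case), and the classical weakly-singular estimates you carry out --- H\"older with $q=p/(p-1)<3/2$, plus the near/far-zone splitting with the gradient bound $\abs{\nabla\mathscr{K}_{\psi^{\theta}}(y)}\leq c\abs{y}^{-3}$ --- are exactly what underlies that citation, with the exponent bookkeeping collapsing correctly to $(p-3)/p$ in both zones.
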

		\begin{proof} Applying a analogous reasoning  to that used in \cite[Theorem 2.3.2]{10} and the fact that $\abs{(x)_{\psi^{\theta}}}=\abs{x}$, the result is obtained.
		\end{proof}
	\begin{theorem} \label{tcd}
		Let $f\in C(\Gamma,\, \mathbb{H(\mathbb{C})})$ and suppose that
		\begin{equation} \label{condicion}
		\Psi_{\Gamma}[f](t):=\frac{1}{4\pi}\mathop{\lim}_{\delta \to 0} \int_{\Gamma\setminus \Gamma_{t,\delta}} \frac{{\abs{f(\xi)-f(t)}}_{c}}{\abs{\xi-t}^{2}} dS_{\xi},
		\end{equation}
exists uniformly with respect to $t\in\Gamma$. Then there exist the singular integral $^{\psi^{\theta}}\Phi[f]$ and the Cauchy transform $^{\psi^{\theta}}K[f]$ satisfies the Sokhotski-Plemelj formulas
		\begin{equation} \label{s-p}
		^{\psi^{\theta}}K_{\Gamma}^{\pm}[f](t):=\mathop{\lim}_{\Omega_{\pm} \ni x \to t}	{^{\psi^{\theta}}}K_{\Gamma}[f](x) =\frac{1}{2}({^{\psi^{\theta}}}\mathscr{S}_{\Gamma}[f](t) \pm f(t)), \, t\in\Gamma.
		\end{equation}
		\begin{proof}
Reasoning as in \cite[Theorem 3.1]{2} and applying that $\abs{(x)_{\psi^{\theta}}}=\abs{x}$ for all $x\in\mathbb{R}^{3}$ we get the proof.
		\end{proof}
	\end{theorem}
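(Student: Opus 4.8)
The plan is to follow the proof of \cite[Theorem 3.1]{2} essentially line by line, replacing the standard structural set by $\psi^\theta$ and invoking the norm identity $\abs{(x)_{\psi^\theta}}=\abs{x}$ wherever the size of the kernel enters. This identity forces $\abs{\mathscr{K}_{\psi^\theta}(x)}=\frac{1}{4\pi}\abs{x}^{-2}$, so $\mathscr{K}_{\psi^\theta}$ carries exactly the singularity of the classical three-dimensional Cauchy kernel; consequently every size estimate used in \cite{2} transfers without modification, and only the bookkeeping of the structural set changes.

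First I would establish the existence of the singular integral $^{\psi^\theta}\Phi[f]$. Because $\nu_{\psi^\theta}(\tau)$ is a real unit quaternion (its norm equals that of the outward unit normal, again by $\abs{(x)_{\psi^\theta}}=\abs{x}$) and $\mathscr{K}_{\psi^\theta}(t-\tau)$ is real, the multiplicative rule $\abs{ab}_c=\abs{a}\,\abs{b}_c$ for $a\in\mathbb{H}$, $b\in\mathbb{H(\mathbb{C})}$ gives the pointwise identity
\begin{equation*}
\absol{\mathscr{K}_{\psi^\theta}(t-\tau)\,\nu_{\psi^\theta}(\tau)\,(f(\tau)-f(t))}_c
=\frac{1}{4\pi}\,\frac{\abs{f(\tau)-f(t)}_c}{\abs{t-\tau}^2}.
\end{equation*}
The right-hand side is precisely the integrand of $\Psi_\Gamma[f](t)$ in \eqref{condicion}, whose principal value exists uniformly in $t$ by hypothesis; hence the Cauchy principal value defining $^{\psi^\theta}\Phi[f]$ converges, and the uniformity is what will subsequently license interchanging the boundary limit with the integration.

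Next I would prove the jump relations \eqref{s-p} by the standard splitting $f(\xi)=(f(\xi)-f(t))+f(t)$, which yields
\begin{equation*}
{}^{\psi^\theta}K_\Gamma[f](x)
=-\int_\Gamma\mathscr{K}_{\psi^\theta}(x-\xi)\,\nu_{\psi^\theta}(\xi)\,(f(\xi)-f(t))\,dS_\xi
+{}^{\psi^\theta}K_\Gamma[\mathbf{1}](x)\,f(t).
\end{equation*}
Since $\mathbf{1}$ is $\psi^\theta$-hyperholomorphic, the Cauchy integral formula \eqref{ficc} together with the exterior case of the Borel--Pompeiu formula \eqref{fbpc} gives $^{\psi^\theta}K_\Gamma[\mathbf{1}](x)=1$ for $x\in\Omega_+$ and $0$ for $x\in\Omega_-$, so the second term supplies exactly the $\pm f(t)$ part of the jump. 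The first, regularized term is the heart of the matter: I claim it extends continuously up to $\Gamma$ with one and the same boundary value from either side, coinciding (up to the sign fixed by the definitions) with the principal-value integral $^{\psi^\theta}\Phi[f](t)$. Inserting these two facts into the definition \eqref{singular} of $^{\psi^\theta}\mathscr{S}_\Gamma$ then reproduces \eqref{s-p}.

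The main obstacle is exactly this continuous-extension claim on a boundary that is only rectifiable. I would split $\Gamma=(\Gamma\setminus\Gamma_{t,\delta})\cup\Gamma_{t,\delta}$: on the far piece the kernel $\mathscr{K}_{\psi^\theta}(x-\xi)$ depends continuously on $x$ for $x$ near $t$, so the limit $x\to t$ passes under the integral sign; on the near piece the contribution must be shown uniformly small, using the bound $\abs{\mathscr{K}_{\psi^\theta}(x-\xi)}\le\frac{1}{4\pi}\abs{x-\xi}^{-2}$, the uniform convergence of $\Psi_\Gamma[f]$, and the control that rectifiability provides on the surface measure of $\Gamma_{t,\delta}$. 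The delicate point is making the near-singularity estimate uniform in the direction of approach, so that the interior and exterior limits of the regularized term genuinely coincide; this is precisely where $\abs{(x)_{\psi^\theta}}=\abs{x}$ is indispensable, since it guarantees that the $\psi^\theta$-kernel obeys the same size bounds that make the argument of \cite[Theorem 3.1]{2} go through verbatim.
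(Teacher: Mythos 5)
Your proposal takes exactly the paper's approach: the paper's entire proof is the one-line reduction to \cite[Theorem 3.1]{2} via the kernel-norm identity $\abs{(x)_{\psi^{\theta}}}=\abs{x}$, and your expansion of what that reduction entails (the pointwise identity tying the integrand to $\Psi_{\Gamma}[f]$, the splitting $f(\xi)=(f(\xi)-f(t))+f(t)$ together with Borel--Pompeiu for constants, and the continuity of the regularized integral across the rectifiable boundary) is precisely the argument being invoked. The only bookkeeping to watch is that with the paper's stated sign conventions the regularized term tends to $-{^{\psi^{\theta}}}\Phi[f](t)$ rather than $+{^{\psi^{\theta}}}\Phi[f](t)$, so \eqref{s-p} only comes out as written after reconciling the sign in the definition of ${^{\psi^{\theta}}}K_{\Gamma}$ with that of \cite{2} --- a discrepancy in the paper's conventions, which your hedge ``up to the sign fixed by the definitions'' already anticipates, not a gap in your argument.
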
 
\section{The Cauchy integral formula: vector fields case}
In this section we analyze the quaternionic Cauchy transform and the Cauchy integral formula restricted to the vector fields case.
	
Let $\mathbf{f}$ be a vector field  defined in $\Gamma$, then the Cauchy transform of $\mathbf{f}$ is given by:
	\begin{equation}
	\begin{split}
	{^{\psi^{\theta}}}K_{\Gamma}[\mathbf{f}](x)&
	=\int_{\Gamma}{\mathscr{K}_{\psi^{\theta}}(x-\xi)\langle\nu_{\psi^{\theta}}(\xi), \mathbf{f}(\xi)\rangle}dS_{\xi}+\int_{\Gamma}{\langle \mathscr{K}_{\psi^{\theta}}(x-\xi), [\nu_{\psi^{\theta}}(\xi), \mathbf{f}(\xi)]\rangle}dS_{\xi}+ \\ & \quad -\int_{\Gamma}{[\mathscr{K}_{\psi^{\theta}}(x-\xi),[\nu_{\psi^{\theta}}(\xi), \mathbf{f}(\xi)]]}dS_{\xi}.
	\end{split}
	\end{equation}
We can rewrite ${^{\psi^{\theta}}}K_{\Gamma}[\mathbf{f}]$ as
	\begin{equation}
	{^{\psi^{\theta}}}K_{\Gamma}[\mathbf{f}](x)=Sc({^{\psi^{\theta}}}K[\mathbf{f}](x))+Vec({^{\psi^{\theta}}}K[\mathbf{f}](x)),
	\end{equation}
where
	\begin{equation}
	Sc({^{\psi^{\theta}}}K_{\Gamma}[\mathbf{f}](x))=\int_{\Gamma}{\langle\mathscr{K}_{ \psi^{\theta}}(x-\xi),[\nu_{\psi^{\theta}}(\xi), \mathbf{f}(\xi)]\rangle}dS_{\xi},  
	\end{equation}
	\begin{equation}
	\begin{split}
	Vec({^{\psi^{\theta}}}K_{\Gamma}[\mathbf{f}](x))&=\int_{\Gamma}{\mathscr{K}_{\psi^{\theta}}(x-\xi)\langle\nu_{\psi^{\theta}}(\xi), \mathbf{f}(\xi)\rangle}dS_{\xi}+\\ & \quad -\int_{\Gamma}{[\mathscr{K}_{\psi^{\theta}}(x-\xi),[\nu_{\psi^{\theta}}(\xi), \mathbf{f}(\xi)]]}dS_{\xi}.
	\end{split}
	\end{equation}
From the previous observation we can see that in general for a vector field  $\mathbf{f}$  the Cauchy transform is not a purely vectorial complex quaternion. 
	\begin{remark}
If $\mathbf{f}\in\mathscr{M}_{\psi^{\theta}}$ the Cauchy transform is expressed as follows:
		\begin{equation}
		{^{\psi^{\theta}}}K_{\Gamma}[\mathbf{f}](x)=\int_{\Gamma}{\mathscr{K}_{\psi^{\theta}}(x-\xi)\langle\nu_{\psi^{\theta}}(\xi), \mathbf{f}(\xi)\rangle}dS_{\xi}  -\int_{\Gamma}{[\mathscr{K}_{\psi^{\theta}}(x-\xi),[\nu_{\psi^{\theta}}(\xi), \mathbf{f}(\xi)]]}dS_{\xi}.
		\end{equation}
	\end{remark}
In the next corollary the Cauchy integral formula is presented for the case of left-$\psi^{\theta}$-hyperholomorphic vector fields.
	\begin{corollary}
		Let $\mathbf{f}$ be a $\psi^{\theta}$-Laplacian vector field in $\Omega\cup\Gamma$ . Then
		\begin{equation} 
		\int_{\Gamma}{\mathscr{K}_{\psi^{\theta}}(x-\xi)\langle\nu_{\psi^{\theta}}(\xi), \mathbf{f}(\xi)\rangle}dS_{\xi}  -\int_{\Gamma}{[\mathscr{K}_{\psi^{\theta}}(x-\xi),[\nu_{\psi^{\theta}}(\xi), \mathbf{f}(\xi)]]}dS_{\xi} =\mathbf{f}(x),
		\end{equation}
		$x\in\Omega$.
		\begin{proof}
The proof of this theorem is obtained from the Cauchy integral formula.  \eqref{ficc}.
		\end{proof}
	\end{corollary}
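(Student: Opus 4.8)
The plan is to combine the Cauchy integral formula \eqref{ficc} with the scalar/vector decomposition of the Cauchy transform displayed just before the Remark, the only nontrivial point being the vanishing of the scalar part. First I would note that by Definition~\ref{dcvl} a $\psi^{\theta}$-Laplacian vector field is left-right-$\psi^{\theta}$-hyperholomorphic in $\Omega$; in particular it is left-$\psi^{\theta}$-hyperholomorphic there. Since $\mathbf{f}$ is moreover continuous on $\Omega\cup\Gamma$, the hypotheses of \eqref{ficc} are met and one obtains ${^{\psi^{\theta}}}K_{\Gamma}[\mathbf{f}](x)=\mathbf{f}(x)$ for every $x\in\Omega$. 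This is the backbone of the argument: it identifies the full Cauchy transform of $\mathbf{f}$ with $\mathbf{f}$ itself on $\Omega$.

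Next I would use the explicit decomposition ${^{\psi^{\theta}}}K_{\Gamma}[\mathbf{f}](x)=Sc({^{\psi^{\theta}}}K_{\Gamma}[\mathbf{f}](x))+Vec({^{\psi^{\theta}}}K_{\Gamma}[\mathbf{f}](x))$ recorded above, in which the vector part is precisely $\int_{\Gamma}{\mathscr{K}_{\psi^{\theta}}(x-\xi)\langle\nu_{\psi^{\theta}}(\xi), \mathbf{f}(\xi)\rangle}dS_{\xi}-\int_{\Gamma}{[\mathscr{K}_{\psi^{\theta}}(x-\xi),[\nu_{\psi^{\theta}}(\xi), \mathbf{f}(\xi)]]}dS_{\xi}$, i.e. the left-hand side of the asserted identity, while the scalar part is $\int_{\Gamma}{\langle\mathscr{K}_{\psi^{\theta}}(x-\xi),[\nu_{\psi^{\theta}}(\xi), \mathbf{f}(\xi)]\rangle}dS_{\xi}$. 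Since $\mathbf{f}(x)$ is a vector field we have $Sc(\mathbf{f}(x))=0$; taking scalar parts in ${^{\psi^{\theta}}}K_{\Gamma}[\mathbf{f}](x)=\mathbf{f}(x)$ therefore forces $Sc({^{\psi^{\theta}}}K_{\Gamma}[\mathbf{f}](x))=0$ for all $x\in\Omega$. Equivalently, $\mathbf{f}\in\mathscr{M}_{\psi^{\theta}}$, which is exactly the hypothesis under which the Remark expresses the Cauchy transform through its two vectorial integrals alone.

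Finally I would assemble the pieces: with the scalar part annihilated, the decomposition reduces to ${^{\psi^{\theta}}}K_{\Gamma}[\mathbf{f}](x)=Vec({^{\psi^{\theta}}}K_{\Gamma}[\mathbf{f}](x))$, so the two surviving integrals equal ${^{\psi^{\theta}}}K_{\Gamma}[\mathbf{f}](x)$, which by \eqref{ficc} equals $\mathbf{f}(x)$ for $x\in\Omega$. This is precisely the claimed formula, and it holds for every $x\in\Omega$.

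The genuinely substantive step is the vanishing of the scalar part, i.e. the passage $\mathbf{f}\in\mathscr{M}_{\psi^{\theta}}$; the rest is a mechanical appeal to \eqref{ficc} and to the already-displayed splitting of the Cauchy transform of a vector field. I expect this scalar-part argument to be the main (and essentially only) obstacle: one must justify that taking the scalar part commutes with the boundary integration and that the purely vectorial character of $\mathbf{f}(x)$ forces the scalar integral to vanish \emph{pointwise} in $\Omega$, rather than merely in an integrated or boundary-value sense. Once this is secured, the identity follows immediately.
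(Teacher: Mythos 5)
Your argument is correct and is essentially the paper's own proof: the authors simply invoke the Cauchy integral formula \eqref{ficc}, and you fill in the routine step of splitting ${^{\psi^{\theta}}}K_{\Gamma}[\mathbf{f}]$ into its scalar and vector parts and matching them against $\mathbf{f}=0+\mathbf{f}$. The one point you flag as the ``main obstacle'' is in fact immediate: \eqref{ficc} is a pointwise identity of quaternions, and since $Sc$ and $Vec$ are linear projections that pass through the surface integral, equating vector parts already yields the claimed formula (the vanishing of the scalar part being a free by-product rather than a prerequisite).
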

	\begin{theorem} \label{thm 5} Let $\mathbf{f}\in \mathscr{M}_{\psi^{\theta}}$ be a continuous vector field and let the integral
		\begin{equation}\label{c1}
		\Psi_{\Gamma}[\mathbf{f}](t):=\frac{1}{4\pi}\mathop{\lim}_{\delta \to 0} \int_{\Gamma\setminus \Gamma_{t,\delta}} \frac{{\abs{\mathbf{f}(\xi)-\mathbf{f}(t)}}_{c}}{\abs{\xi-t}^{2}} dS_{\xi}.
		\end{equation}
exists uniformly with respect to $t\in\Gamma$. Then, for every  $t\in\Gamma$ there exists the Cauchy singular integral ${^{\psi^{\theta}}}\pmb{\mathscr{S}}_{\Gamma}[\mathbf{f}]$, the Cauchy transform ${^{\psi^{\theta}}}\pmb{K}_{\Gamma}[\mathbf{f}]$ has continuous limit values on $\Gamma$ and the following analogues of Sokhotski-Plemelj formulas hold:
		\begin{equation}
		{^{\psi^{\theta}}}\pmb{K}_{\Gamma}^{\pm}[\mathbf{f}](t):= \mathop{\lim}_{\Omega_{\pm} \ni x \to t}	{^{\psi^{\theta}}}\pmb{K}_{\Gamma}[\mathbf{f}](x) =\frac{1}{2}({^{\psi^{\theta}}}\pmb{\mathscr{S}}_{\Gamma}[\mathbf{f}](t) \pm \mathbf{f}(t)), \quad t\in\Gamma.
		\end{equation}
The class of continuous vector fields that satisfy  \eqref{c1} is denoted by $\mathscr{D}(\Gamma,\, \mathbb{H(\mathbb{C})})$.
		\begin{proof}
Let $\mathbf{f} \in \mathscr{M}_{\psi^{\theta}}\cap\mathscr{D}(\Gamma,\, \mathbb{H(\mathbb{C})}) $,  then the Cauchy transform  ${^{\psi^{\theta}}}\mathbf{K}_{\Gamma}[\mathbf{f}]:={^{\psi^{\theta}}}K_{\Gamma}[\mathbf{f}]$ is a complex quaternionic function with null scalar part. Now, we only apply the Theorem  \ref{tcd} to the function $f:=\mathbf{f}$ to get the result.
		\end{proof}
	\end{theorem}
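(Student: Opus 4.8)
The plan is to reduce the statement to the scalar--kernel Sokhotski--Plemelj theorem already established in Theorem \ref{tcd}, exploiting the fact that membership in $\mathscr{M}_{\psi^{\theta}}$ forces the scalar part of the quaternionic Cauchy transform to vanish. First I would regard the continuous vector field $\mathbf{f}$ as a complex quaternionic function $f:=\mathbf{f}$ with null scalar part, to which the operator ${^{\psi^{\theta}}}K_{\Gamma}$ of Section 2 applies. By the decomposition ${^{\psi^{\theta}}}K_{\Gamma}[\mathbf{f}]=Sc({^{\psi^{\theta}}}K_{\Gamma}[\mathbf{f}])+Vec({^{\psi^{\theta}}}K_{\Gamma}[\mathbf{f}])$ recorded in Section 3, its scalar part equals $\int_{\Gamma}\langle\mathscr{K}_{\psi^{\theta}}(x-\xi),[\nu_{\psi^{\theta}}(\xi),\mathbf{f}(\xi)]\rangle\,dS_{\xi}$, which vanishes for every $x\notin\Gamma$ precisely because $\mathbf{f}\in\mathscr{M}_{\psi^{\theta}}$. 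Consequently ${^{\psi^{\theta}}}K_{\Gamma}[\mathbf{f}]$ is purely vectorial on $\Omega_{\pm}$ and coincides there with the vector Cauchy transform ${^{\psi^{\theta}}}\pmb{K}_{\Gamma}[\mathbf{f}]$, which legitimizes the identification ${^{\psi^{\theta}}}\pmb{K}_{\Gamma}[\mathbf{f}]:={^{\psi^{\theta}}}K_{\Gamma}[\mathbf{f}]$.

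Next I would note that the hypothesis \eqref{c1} on $\Psi_{\Gamma}[\mathbf{f}]$ is exactly the hypothesis \eqref{condicion} of Theorem \ref{tcd} read for $f:=\mathbf{f}$, since both are built from the same modulus ${\abs{\mathbf{f}(\xi)-\mathbf{f}(t)}}_{c}$ and the same kernel $\abs{\xi-t}^{-2}$. Theorem \ref{tcd} then applies verbatim and provides the existence of the singular integral ${^{\psi^{\theta}}}\Phi[\mathbf{f}]$, the existence of continuous boundary limits of the Cauchy transform, and the Sokhotski--Plemelj formulas \eqref{s-p}, that is ${^{\psi^{\theta}}}K_{\Gamma}^{\pm}[\mathbf{f}](t)=\frac{1}{2}({^{\psi^{\theta}}}\mathscr{S}_{\Gamma}[\mathbf{f}](t)\pm\mathbf{f}(t))$. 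It then remains only to transcribe these conclusions into the purely vectorial language of the statement.

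The step that I expect to demand the most care is confirming that the singular Cauchy transform ${^{\psi^{\theta}}}\mathscr{S}_{\Gamma}$ of the general theory, specialized to $\mathbf{f}\in\mathscr{M}_{\psi^{\theta}}$, reduces to the vector singular transform ${^{\psi^{\theta}}}\pmb{\mathscr{S}}_{\Gamma}[\mathbf{f}]$ of the Introduction and carries no surviving scalar component. Expanding the triple product $\mathscr{K}_{\psi^{\theta}}(t-\tau)\,\nu_{\psi^{\theta}}(\tau)\,(\mathbf{f}(\tau)-\mathbf{f}(t))$ by the multiplication rule \eqref{pc2} and separating scalar and vector parts recovers the two surface integrals that constitute ${^{\psi^{\theta}}}\pmb{\mathscr{S}}_{\Gamma}[\mathbf{f}]$, the emerging scalar integrand being the one defining $\mathscr{M}_{\psi^{\theta}}$. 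Finally, taking scalar parts in \eqref{s-p} and using $Sc(\mathbf{f})=0$ together with the vanishing of $Sc({^{\psi^{\theta}}}K_{\Gamma}[\mathbf{f}])$ on $\Omega_{\pm}$ forces $Sc({^{\psi^{\theta}}}\mathscr{S}_{\Gamma}[\mathbf{f}])=0$, so that ${^{\psi^{\theta}}}\mathscr{S}_{\Gamma}[\mathbf{f}]={^{\psi^{\theta}}}\pmb{\mathscr{S}}_{\Gamma}[\mathbf{f}]$ is itself purely vectorial. Collecting these facts delivers the asserted vectorial Sokhotski--Plemelj formulas and finishes the proof.
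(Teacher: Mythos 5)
Your proposal is correct and follows essentially the same route as the paper: identify $\mathbf{f}$ with a quaternionic function of null scalar part, observe that membership in $\mathscr{M}_{\psi^{\theta}}$ kills the scalar part of ${^{\psi^{\theta}}}K_{\Gamma}[\mathbf{f}]$, and then invoke Theorem \ref{tcd}. You merely spell out the details the paper leaves implicit, in particular the verification that ${^{\psi^{\theta}}}\mathscr{S}_{\Gamma}[\mathbf{f}]$ is itself purely vectorial and coincides with ${^{\psi^{\theta}}}\pmb{\mathscr{S}}_{\Gamma}[\mathbf{f}]$.
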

\section{Boundary values of the Cauchy transform}
The following theorems provide two sets of assertions equivalent with the decomposition given by \eqref{des}.
	\begin{theorem} \label{thm 2} 
Let $\mathbf{f}\in\mathscr{D}(\Gamma,\, \mathbb{H(\mathbb{C})})\cap \mathscr{M}_{\psi^{\theta}}$ a vector field. Then the following statements are equivalent:
		\begin{itemize}
			\item [\textbf{1.}] The vector field $\mathbf{f}$ admits on $\Gamma$ a decomposition of the form \eqref{des}.
			\item [\textbf{2.}] The left-$\psi^{\theta}$-hyperholomorphic transform ${^{\psi^{\theta}}}K_{\Gamma}[\mathbf{f}]$ is right-$\psi^{\theta}$-hyperholomorphic in $\mathbb{R}^{3}\setminus\Gamma$.
			\item [\textbf{3.}] The vector field $Vec({^{\psi^{\theta}}}K_{\Gamma}[\mathbf{f}])$ is a $\psi^{\theta}$-Laplacian vector field.
			\item [\textbf{4.}] $Sc({^{\psi^{\theta}}}K_{\Gamma}[\mathbf{f}])=0$ in $\mathbb{R}^{3}$.
		\end{itemize}
		\begin{proof}  
We give only the proof of  $\textbf{3}$ $\Rightarrow$ $\textbf{4}$. The remaining implications can be proved using similar arguments as in \cite[Theorem 3.3]{2}. To this end, let us consider ${^{\psi^{\theta}}}K_{\Gamma}[\mathbf{f}]=Sc({^{\psi^{\theta}}}K_{\Gamma}[\mathbf{f}])+Vec({^{\psi^{\theta}}}K_{\Gamma}[\mathbf{f}])$. By hypothesis, $Vec({^{\psi^{\theta}}}K_{\Gamma}[\mathbf{f}])$ is a $\psi^{\theta}$-Laplacian vector field on $\mathbb{R}^{3}\setminus\Gamma$. Acting ${^{\psi^{\theta}}}D$ on $Sc({^{\psi^{\theta}}}K_{\Gamma}[\mathbf{f}])$ gives ${^{\psi^{\theta}}}\text{grad}\left[Sc({^{\psi^{\theta}}}K_{\Gamma}[\mathbf{f}])\right](x)=0$ for all $x\in\mathbb{R}\setminus\Gamma$.
			
For abbreviation, we write $L$  instead of $Sc(^{\psi^{\theta}}K_{\Gamma}[\mathbf{f}])$. It is a simple matter to show that
			\begin{equation}
			\notag
			\begin{split}
			&{^{\psi^{\theta}}}\text{grad}[L](x)=\frac{\partial L}{\partial x_{1}}(x)\textbf{i}+\frac{\partial L}{\partial x_{2}}(x)\textbf{i}e^{\textbf{i}\theta}\textbf{j}+\frac{\partial L}{\partial x_{3}}(x)e^{\textbf{i}\theta}\textbf{j}=\\ &=\frac{\partial L}{\partial x_{1}}(x)\textbf{i}+\left(\frac{\partial L}{\partial x_{3}}(x)\cos(\theta)-\frac{\partial L}{\partial x_{2}}(x)\sin(\theta)\right)\textbf{j}+\left(\frac{\partial}{\partial x_{3}}(x)\sin(\theta)+\frac{\partial L}{\partial x_{2}}(x)\cos(\theta)\right)\textbf{k}=0,
			\end{split}
			\end{equation}
is equivalent to the system
			\begin{equation}
			\notag
			\left\{
			\begin{array}{rcl}
			\displaystyle \frac{\partial L}{\partial x_{1}}(x) & = & 0, 
			\\ {}\\	\displaystyle \frac{\partial L}{\partial x_{3}}(x)\cos(\theta)-\frac{\partial L}{\partial x_{2}}(x)\sin(\theta) & = & 0,
			\\ {}\\	\displaystyle \frac{\partial L}{\partial x_{3}}(x)\sin(\theta)+\frac{\partial L}{\partial x_{2}}(x)\cos(\theta)  & = & 0. 
			\end{array}
			\right. 
			\end{equation}
whose solution is such that $\displaystyle\frac{\partial L}{\partial x_{s}}=0$ in $\mathbb{R}^{3}$, $s\in\{1, 2, 3\}$ . This implies that $Sc(^{\psi^{\theta}}K_{\Gamma}[\mathbf{f}])$ is constant in $\Omega_{\pm}$. Moreover, as $\mathbf{f}\in\mathscr{D}(\Gamma,\, \mathbb{H(\mathbb{C})})$, in view of the Sokhotski-Plemelj formulas 
			\begin{equation}
			\mathop{\lim}_{\Omega_{+} \ni x \to t}Sc(^{\psi^{\theta}}K_{\Gamma}[\mathbf{f}])(x)=\mathop{\lim}_{\Omega_{-} \ni x \to t}Sc(^{\psi^{\theta}}K_{\Gamma}[\mathbf{f}])(x)),
			\end{equation}
which implies that  $Sc(^{\psi^{\theta}}K_{\Gamma}[\mathbf{f}])$ is continuous in $\mathbb{R}^{3}$ and since $^{\psi^{\theta}}K_{\Gamma}[\mathbf{f}]$ vanish at infinity $\textbf{4}$ holds.
		\end{proof}
	
	\end{theorem}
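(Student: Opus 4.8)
The plan is to prove the equivalence as a single cycle $\mathbf{1}\Rightarrow\mathbf{2}\Rightarrow\mathbf{3}\Rightarrow\mathbf{4}\Rightarrow\mathbf{1}$, relying throughout on one structural fact: for a \emph{vector field} the null solutions of ${^{\psi^{\theta}}}D$ and of $D^{\psi^{\theta}}$ coincide, since by the equivalences of \eqref{eq2} and \eqref{eq5} with \eqref{sed} both reduce to the same system. Consequently a left-$\psi^{\theta}$-hyperholomorphic vector field is automatically two-sided, i.e. $\psi^{\theta}$-Laplacian. Throughout I would write $F:={^{\psi^{\theta}}}K_{\Gamma}[\mathbf{f}]=L+\vec{F}$ with $L:=Sc(F)$ and $\vec{F}:=Vec(F)$, recalling that $F$ is always left-$\psi^{\theta}$-hyperholomorphic off $\Gamma$ and vanishes at infinity.

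For $\mathbf{1}\Rightarrow\mathbf{2}$ I would start from $\mathbf{f}=\mathbf{f}^{+}+\mathbf{f}^{-}$ with $\psi^{\theta}$-Laplacian extensions $\mathbf{F}^{\pm}$ in $\Omega_{\pm}$ and $\mathbf{F}^{-}(\infty)=0$, and split $F={^{\psi^{\theta}}}K_{\Gamma}[\mathbf{f}^{+}]+{^{\psi^{\theta}}}K_{\Gamma}[\mathbf{f}^{-}]$. The Cauchy integral formula \eqref{ficc} and its exterior counterpart (using $\mathbf{F}^{-}(\infty)=0$) identify $F$ with $\mathbf{F}^{+}$ on $\Omega_{+}$ and with $-\mathbf{F}^{-}$ on $\Omega_{-}$; as both are $\psi^{\theta}$-Laplacian, $F$ is in particular right-$\psi^{\theta}$-hyperholomorphic, which is $\mathbf{2}$. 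Then $\mathbf{2}\Rightarrow\mathbf{3}$ is immediate: $F$ being left- (automatic) and right-hyperholomorphic is two-sided, so Lemma \ref{two-sided} gives that $\vec{F}$ is $\psi^{\theta}$-Laplacian.

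The step $\mathbf{3}\Rightarrow\mathbf{4}$ is the one carried out above: $\vec{F}$ being $\psi^{\theta}$-Laplacian together with ${^{\psi^{\theta}}}D[F]=0$ forces ${^{\psi^{\theta}}}\text{grad}[L]=0$, hence $L$ is constant on each component of $\mathbb{R}^{3}\setminus\Gamma$, and continuity across $\Gamma$ (from $\mathbf{f}\in\mathscr{D}(\Gamma,\mathbb{H(\mathbb{C})})$ and the Sokhotski--Plemelj formulas) with $L(\infty)=0$ gives $L\equiv 0$. To close the cycle, $\mathbf{4}\Rightarrow\mathbf{1}$: if $L\equiv 0$ then $F=\vec{F}$ is a left-$\psi^{\theta}$-hyperholomorphic vector field, hence $\psi^{\theta}$-Laplacian by the opening remark, so its restrictions solve \eqref{sed} in $\Omega_{\pm}$. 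Setting $\mathbf{F}^{+}:=F|_{\Omega_{+}}$, $\mathbf{F}^{-}:=-F|_{\Omega_{-}}$ and reading off the boundary limits via Theorem \ref{thm 5}, the jump $F^{+}-F^{-}=\mathbf{f}$ yields \eqref{des}, with $\mathbf{F}^{-}(\infty)=0$ inherited from the decay of $F$.

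I expect the crux to be the control of the scalar part $L$ rather than any single implication: the genuine work is the passage already displayed in $\mathbf{3}\Rightarrow\mathbf{4}$, where the hyperholomorphicity of the vector part is shown to annihilate ${^{\psi^{\theta}}}\text{grad}[L]$ and thereby force $L$ to be constant---this is what reconciles the full complex-quaternionic Cauchy transform, which a priori carries a nonzero scalar part, with the purely vectorial system \eqref{sed}. By comparison the orientation and sign bookkeeping in $\mathbf{1}\Rightarrow\mathbf{2}$ and $\mathbf{4}\Rightarrow\mathbf{1}$ (the minus sign on $\mathbf{F}^{-}$ and the exterior Cauchy formula) is routine, provided it is kept consistent with the vanishing-at-infinity hypothesis.
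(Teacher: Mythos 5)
Your proposal is correct and follows essentially the same route as the paper: the implication $\textbf{3}\Rightarrow\textbf{4}$ (the only one the paper writes out) is argued identically --- annihilate ${^{\psi^{\theta}}}\text{grad}[L]$, deduce that $L$ is constant on $\Omega_{\pm}$, then use the Sokhotski--Plemelj continuity across $\Gamma$ and the decay at infinity. The remaining implications, which the paper delegates to \cite[Theorem 3.3]{2}, are supplied by you via the interior/exterior Cauchy integral formula, Lemma \ref{two-sided}, and the jump relation of Theorem \ref{thm 5}, which is precisely the intended argument.
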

	\begin{theorem} \label{thm 3} Let $\mathbf{f}\in\mathscr{D}(\Gamma,\, \mathbb{H(\mathbb{C})})$, then the following assertions are equivalent:
		\begin{itemize}
			\item [1.]  The Cauchy transform $^{\psi^{\theta}}K_{\Gamma}[\mathbf{f}]$ is right-$\psi^{\theta}$-hyperholomophic in $\mathbb{R}^{3}\setminus\Gamma$.
			\item [2.] $\mathbf{f}\in \mathscr{M}_{\psi^{\theta}}^{*}$.
			\item [3.] $^{\psi^{\theta}}\mathscr{S}_{\Gamma}[\mathbf{f}]=[\mathbf{f}]{^{\psi^{\theta}}}\mathscr{S}_{\Gamma}$.
		\end{itemize}
		\begin{proof}
			The proof is obtained reasoning as in \cite[Theorem 3.4]{2}.
		\end{proof}
	\end{theorem}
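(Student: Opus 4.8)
The plan is to collapse the three assertions onto a single scalar quantity, the boundary value of $Sc({^{\psi^{\theta}}}K_{\Gamma}[\mathbf{f}])$, by playing the left Cauchy transform $^{\psi^{\theta}}K_{\Gamma}[\mathbf{f}]$ against its right analogue $[\mathbf{f}]{^{\psi^{\theta}}}K_{\Gamma}$. First I would record the algebraic preliminary behind Section 3: expanding $-\mathscr{K}_{\psi^{\theta}}(x-\xi)\,\nu_{\psi^{\theta}}(\xi)\,\mathbf{f}(\xi)$ and $-\mathbf{f}(\xi)\,\nu_{\psi^{\theta}}(\xi)\,\mathscr{K}_{\psi^{\theta}}(x-\xi)$ via \eqref{pc2}, and using $Sc(\mathbf{f})=0$ together with the symmetry of $\langle\cdot,\cdot\rangle$ and the antisymmetry of $[\cdot,\cdot]$, one checks that the two transforms share the same vector part and opposite scalar parts: $^{\psi^{\theta}}K_{\Gamma}[\mathbf{f}]=Sc({^{\psi^{\theta}}}K_{\Gamma}[\mathbf{f}])+Vec({^{\psi^{\theta}}}K_{\Gamma}[\mathbf{f}])$ while $[\mathbf{f}]{^{\psi^{\theta}}}K_{\Gamma}=-Sc({^{\psi^{\theta}}}K_{\Gamma}[\mathbf{f}])+Vec({^{\psi^{\theta}}}K_{\Gamma}[\mathbf{f}])$, where $Sc({^{\psi^{\theta}}}K_{\Gamma}[\mathbf{f}])(x)=\int_{\Gamma}\langle\mathscr{K}_{\psi^{\theta}}(x-\xi),[\nu_{\psi^{\theta}}(\xi),\mathbf{f}(\xi)]\rangle\,dS_{\xi}$. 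Subtracting yields the master identity $^{\psi^{\theta}}K_{\Gamma}[\mathbf{f}]-[\mathbf{f}]{^{\psi^{\theta}}}K_{\Gamma}=2\,Sc({^{\psi^{\theta}}}K_{\Gamma}[\mathbf{f}])$, and passing to boundary values through the Sokhotski--Plemelj formulas (Theorem \ref{tcd}, applicable since $\mathbf{f}\in\mathscr{D}(\Gamma,\mathbb{H(\mathbb{C})})$) gives the parallel singular identity $^{\psi^{\theta}}\mathscr{S}_{\Gamma}[\mathbf{f}]-[\mathbf{f}]{^{\psi^{\theta}}}\mathscr{S}_{\Gamma}=2\,Sc({^{\psi^{\theta}}}\mathscr{S}_{\Gamma}[\mathbf{f}])$.

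Granting these, the equivalence $\mathbf{2}\Leftrightarrow\mathbf{3}$ falls out at once: $Sc({^{\psi^{\theta}}}\mathscr{S}_{\Gamma}[\mathbf{f}])(t)$ is a nonzero constant multiple of the principal-value integral $\int_{\Gamma}\langle\mathscr{K}_{\psi^{\theta}}(t-\xi),[\nu_{\psi^{\theta}}(\xi),\mathbf{f}(\xi)]\rangle\,dS_{\xi}$ defining $\mathscr{M}_{\psi^{\theta}}^{*}$, so the right-hand side of the singular identity vanishes exactly when $\mathbf{f}\in\mathscr{M}_{\psi^{\theta}}^{*}$. To reduce $\mathbf{1}$ I would apply the right operator $D^{\psi^{\theta}}$ to $^{\psi^{\theta}}K_{\Gamma}[\mathbf{f}]=2\,Sc({^{\psi^{\theta}}}K_{\Gamma}[\mathbf{f}])+[\mathbf{f}]{^{\psi^{\theta}}}K_{\Gamma}$. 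Because $[\mathbf{f}]{^{\psi^{\theta}}}K_{\Gamma}$ is right-$\psi^{\theta}$-hyperholomorphic and $D^{\psi^{\theta}}$ acts on the scalar $Sc$ as ${^{\psi^{\theta}}}\text{grad}$, this gives $D^{\psi^{\theta}}[{^{\psi^{\theta}}}K_{\Gamma}[\mathbf{f}]]=2\,{^{\psi^{\theta}}}\text{grad}[Sc({^{\psi^{\theta}}}K_{\Gamma}[\mathbf{f}])]$, so statement $\mathbf{1}$ is equivalent to ${^{\psi^{\theta}}}\text{grad}[Sc({^{\psi^{\theta}}}K_{\Gamma}[\mathbf{f}])]=0$ on $\mathbb{R}^{3}\setminus\Gamma$, precisely the situation analyzed in the proof of Theorem \ref{thm 2}.

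The hard part will be closing the gap between this global differential condition and the boundary condition $\mathbf{f}\in\mathscr{M}_{\psi^{\theta}}^{*}$. For $\mathbf{1}\Rightarrow\mathbf{2}$ I would argue, as in Theorem \ref{thm 2}, that ${^{\psi^{\theta}}}\text{grad}[Sc]=0$ forces $Sc({^{\psi^{\theta}}}K_{\Gamma}[\mathbf{f}])$ to be locally constant on each of $\Omega_{\pm}$; since $^{\psi^{\theta}}K_{\Gamma}[\mathbf{f}]$ vanishes at infinity this constant is $0$ on $\Omega_{-}$, and since the jump of $Sc$ across $\Gamma$ equals $Sc(\mathbf{f})=0$ (read from $^{\psi^{\theta}}K_{\Gamma}^{+}[\mathbf{f}]-{^{\psi^{\theta}}}K_{\Gamma}^{-}[\mathbf{f}]=\mathbf{f}$), both one-sided limits of $Sc$ vanish on $\Gamma$, which is $\mathbf{2}$. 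For the converse $\mathbf{2}\Rightarrow\mathbf{1}$ I would use that each component of the left-$\psi^{\theta}$-hyperholomorphic function $^{\psi^{\theta}}K_{\Gamma}[\mathbf{f}]$ is harmonic (by the factorization \eqref{opl}), so $Sc({^{\psi^{\theta}}}K_{\Gamma}[\mathbf{f}])$ is harmonic on $\mathbb{R}^{3}\setminus\Gamma$ with vanishing boundary values; the maximum principle on the bounded domain $\Omega_{+}$, and together with the decay at infinity on the unbounded domain $\Omega_{-}$, then forces $Sc\equiv 0$ throughout, whence ${^{\psi^{\theta}}}\text{grad}[Sc]=0$ and $\mathbf{1}$ follows. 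The two delicate points I anticipate are the harmonic-uniqueness step in the exterior domain $\Omega_{-}$ and the verification that, for merely rectifiable $\Gamma$, the one-sided limits and the normal field $\nu_{\psi^{\theta}}$ exist almost everywhere, so that the Sokhotski--Plemelj formulas and the jump computation are legitimate.
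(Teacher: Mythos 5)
Your argument is correct and is essentially the proof the paper has in mind: the paper simply defers to \cite[Theorem 3.4]{2}, and your scalar/vector bookkeeping, the identity ${^{\psi^{\theta}}}K_{\Gamma}[\mathbf{f}]-[\mathbf{f}]{^{\psi^{\theta}}}K_{\Gamma}=2\,Sc({^{\psi^{\theta}}}K_{\Gamma}[\mathbf{f}])$ with its singular counterpart, and the harmonicity-plus-maximum-principle treatment of $Sc({^{\psi^{\theta}}}K_{\Gamma}[\mathbf{f}])$ are exactly the ingredients of that proof. The one step worth making explicit is the passage from $Sc({^{\psi^{\theta}}}\mathscr{S}_{\Gamma}[\mathbf{f}])(t)$ to the integral defining $\mathscr{M}_{\psi^{\theta}}^{*}$: it uses that the principal value of $\int_{\Gamma}\langle\mathscr{K}_{\psi^{\theta}}(t-\tau),[\nu_{\psi^{\theta}}(\tau),\mathbf{f}(t)]\rangle\,dS_{\tau}$ vanishes, which holds because the principal value of $\int_{\Gamma}\mathscr{K}_{\psi^{\theta}}(t-\tau)\nu_{\psi^{\theta}}(\tau)\,dS_{\tau}$ is purely scalar (apply the Sokhotski--Plemelj formulas to the constant function $1$), a fact subsumed under the existence issues you already flag for rectifiable $\Gamma$.
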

\section{Main result}
Before beginning the proof of our main result we give some important remarks.
	
Let $\mathbf{f}$ be a $\psi^{\theta}$-Laplacian vector field. Applying ${^{\psi^{\theta}}}D$ and $D^{\psi^{\theta}}$ to $\mathbf{f}$ we obtain that equations
	\begin{equation}
	\notag
	{^{\psi^{\theta}}}D[\mathbf{f}]=-{^{\psi^{\theta}}}\text{div}[\mathbf{f}]+{^{\psi^{\theta}}}\text{rot}[\mathbf{f}]=0,
	\end{equation}
	\begin{equation}
	\notag
	D^{\psi^{\theta}}[\mathbf{f}]=-{^{\overline{\psi^{\theta}}}}\text{div}[\mathbf{f}]+{^{\overline{\psi^{\theta}}}}\text{rot}[\mathbf{f}]=0,
	\end{equation}
are equivalent. In other words, the class of $\psi^{\theta}$-Laplacian vector fields is identified with the set of purely vectorial  $\psi^{\theta}$-hyperholomorphic functions. 
	
By definition of $\mathscr{M}_{\psi^{\theta}}$ and $\mathscr{M}_{\psi^{\theta}}^{*}$, we have	
	\begin{equation}
	\notag
	\mathscr{D}(\Gamma, \mathbb{H(\mathbb{C})})\cap \mathscr{M}_{\psi^{\theta}}:=\bigg\{\mathbf{f} \in \mathscr{D}(\Gamma,\, \mathbb{H(\mathbb{C})}): \int_{\Gamma}{\langle\mathscr{K}_{\psi^{\theta}}(x-\xi),[\nu_{\psi^{\theta}}(\xi), \mathbf{f}(\xi)]\rangle}dS_{\xi}=0,  x\not\in\Gamma \bigg\},
	\end{equation}
	\begin{equation}
	\notag
	\mathscr{D}(\Gamma, \mathbb{H(\mathbb{C})})\cap\mathscr{M}_{\psi^{\theta}}^{*}:=\bigg\{\mathbf{f} \in \mathscr{D}(\Gamma,\, \mathbb{H(\mathbb{C})}): \int_{\Gamma}{\langle\mathscr{K}_{\psi^{\theta}}(x-\xi),[\nu_{\psi^{\theta}}(\xi), \mathbf{f}(\xi)]\rangle}dS_{\xi}=0,  x\in\Gamma \bigg\}.
	\end{equation}
Theorem \ref{thm 5} now shows that
	\begin{equation}
	\notag
	2\lim_{\Omega_{\pm} \ni x \to t}Sc({^{\psi^{\theta}}}K_{\Gamma}[\mathbf{f}])(x)=2Sc({^{\psi^{\theta}}}K^{\pm}_{\Gamma}[\mathbf{f}])(t)=2Sc({^{\psi^{\theta}}}\mathscr{S}_{\Gamma}[\mathbf{f}])(t),
	\end{equation}
which implies that
	\begin{equation}
	\notag
	\text{if}\, \mathbf{f}\in \mathscr{D}(\Gamma,\, \mathbb{H(\mathbb{C})})\cap \mathscr{M}_{\psi^{\theta}}, \,\, \text{then}\,\, \mathbf{f} \in \mathscr{D}(\Gamma,\, \mathbb{H(\mathbb{C})})\cap\mathscr{M}_{\psi^{\theta}}^{*}.
	\end{equation}	
	
Now we are in conditions to state and proof our main result. 

	\begin{theorem} \label{thm 6} Let $\Omega$ be a Jordan domain in  $\mathbb{R}^{3}$  with rectifiable boundary $\Gamma$. Let $\mathbf{f}\in Lip_{\mu}(\Gamma,\, \mathbb{H(\mathbb{C})})\cap\mathscr{M}_{\psi^{\theta}}$. Then,  for $\displaystyle \frac{2}{3}<\mu \leq 1$ the vector field $\mathbf{f}$ admits on $\Gamma$ a unique decomposition of the form \eqref{des}.
\begin{proof} The proof will be divided into two steps.

\medskip
\noindent			
\textbf{Existence:}	
Let $\mathbf{f}\in Lip_{\mu}(\Gamma,\, \mathbb{H(\mathbb{C})})\cap\mathscr{M}_{\psi^{\theta}}$. Inspired by the Borel Pompieu formula, we consider a new kind of Cauchy transform given by:
			\begin{equation}\label{tc}
			\begin{split}
			{^{\psi^{\theta}}}K^{*}_{\Gamma}[\mathbf{f}](x):=-{^{\psi^{\theta}}}T[{^{\psi^{\theta}}}D[{\bf f}^{w}]](x)+ {\bf f}^{w}(x), \quad x\in \mathbb{R}^{3}\setminus\Gamma.
			\end{split}
			\end{equation}

We can write ${\bf f}^{w}={\bf f}_{1}^{w}+i {\bf f}_{2}^{w}$, by Proposition \ref{lemmalp} it follows that ${^{\psi^{\theta}}}D[{\bf f}_{1,2}^{w}]\in L_{p}(\Omega,\, \mathbb{H})$ for $p<\displaystyle\frac{1}{1-\mu}$. The condition $\displaystyle\frac{2}{3}<\mu$ implies that ${^{\psi^{\theta}}}D[{\bf f}_{1,2}^{w}]\in L_{p}(\Omega,\, \mathbb{H})$ for some $p>3$ . Due the fact that 
\begin{equation}\label{tc2}
-{^{\psi^{\theta}}}T[{^{\psi^{\theta}}}D[{\bf f}^{w}]](x)=-{^{\psi^{\theta}}}T[{^{\psi^{\theta}}}D[{\bf f}^{w}_{1}]](x)-i{^{\psi^{\theta}}}T[{^{\psi^{\theta}}}D[{\bf f}^{w}_{2}]](x), \quad x\in \mathbb{R}^{3}\setminus\Gamma.
\end{equation}
 by Theorem \ref{theodoresco} the integrals on the right side of \eqref{tc2} exist and their sum represent a continuous function in $\mathbb{R}^{3}$. Hence, ${^{\psi^{\theta}}}K^{*}_{\Gamma}[\mathbf{f}]$ exists and possesses continuous extensions to the closures of the domains $\Omega_{\pm}$.  Consequently, the problem (\ref{des}) has a solution given by
			\begin{equation}
			{\bf F}^{+}(x):=-{^{\psi^{\theta}}}T[{^{\psi^{\theta}}}D[{\bf f}^{w}]](x)+{\bf f}^{w}(x),
			\end{equation}
			\begin{equation}
			{\bf F}^{-}(x):={^{\psi^{\theta}}}T[{^{\psi^{\theta}}}D[{\bf f}^{w}]](x).
			\end{equation}	
			
The difference between the limit values of ${^{\psi^{\theta}}}K^{*}_{\Gamma}[\mathbf{f}](x)$ in $\Gamma$ approaching $x$ from $\Omega_{\pm}$ is equal to
			\begin{equation}
			{\bf F}^{+}(t)+{\bf F}^{-}(t)={\bf f}^{w}(t)=\mathbf{f}(t), \quad \forall \, t\in\Gamma.
			\end{equation}	
			
The vector fields ${\bf F}^{\pm}$ are left-$\psi^{\theta}$-hyperholomorphics in $\Omega_{\pm}$ and each has null scalar part, therefore ${\bf F}^{\pm}$ are right-$\psi^{\theta}$-hyperholomorphics in $\Omega_{\pm}$. 
\medskip

\noindent			
\textbf{Uniqueness:}
Since ${\bf F}^{\pm}={\bf F}_{1}^{\pm}+i{\bf F}_{2}^{\pm}$, it follows that $\mathbf{f}=\mathbf{f}_{1}+i\mathbf{f}_{2}$, where
			\begin{equation}
			\mathbf{f}_{1}(t)={\bf F}_{1}^{+}(t)+{\bf F}_{1}^{-}(t), \quad \forall \, t\in\Gamma,
			\end{equation}
			\begin{equation}
			\mathbf{f}_{2}(t)={\bf F}_{2}^{+}(t)+{\bf F}_{2}^{-}(t), \quad \forall \, t\in\Gamma, 
			\end{equation}
and ${\bf F}_{1,2}^{\pm}$ are left-right-$\psi^{\theta}$-hyperholomorphic in $\Omega_{\pm}$, hence it is enough to prove that ${\bf F}_{1,2}^{\pm}$ are unique. Indeed, the uniqueness of such functions follow from a sort of Painlev\'e Theorem (see \cite[Corollary 5.3]{18}) taking into account that the considered structural set plays no role in the proof. 
		\end{proof}
	\end{theorem}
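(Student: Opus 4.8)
The plan is to separate the argument into existence and uniqueness, with the whole construction resting on pushing the boundary data into the interior by a Whitney extension and then exploiting the $L_{p}$-regularity of the Teodorescu transform. First I would extend $\mathbf{f}$ off $\Gamma$: by Theorem~\ref{inciso3} the Whitney extension $\mathbf{f}^{w}$ lies in $Lip_{\mu}(\Omega\cup\Gamma,\,\mathbb{H(\mathbb{C})})$ and, by Proposition~\ref{lemmalp}, satisfies ${}^{\psi^{\theta}}D[\mathbf{f}^{w}]\in L_{p}(\mathbb{R}^{3},\,\mathbb{H(\mathbb{C})})$ for every $p<\tfrac{1}{1-\mu}$. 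The sole role of the hypothesis $\tfrac{2}{3}<\mu$ is to force $\tfrac{1}{1-\mu}>3$, so that one may fix some $p>3$; this is the single place where the regularity threshold enters, and it is exactly what unlocks Theorem~\ref{theodoresco}.

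For existence I would introduce the auxiliary transform
\[
{}^{\psi^{\theta}}K^{*}_{\Gamma}[\mathbf{f}](x):=-{}^{\psi^{\theta}}T\big[{}^{\psi^{\theta}}D[\mathbf{f}^{w}]\big](x)+\mathbf{f}^{w}(x),\qquad x\in\mathbb{R}^{3}\setminus\Gamma,
\]
and set $\mathbf{F}^{+}:={}^{\psi^{\theta}}K^{*}_{\Gamma}[\mathbf{f}]$ on $\Omega_{+}$ and $\mathbf{F}^{-}:={}^{\psi^{\theta}}T[{}^{\psi^{\theta}}D[\mathbf{f}^{w}]]$ on $\Omega_{-}$, where $\mathbf{f}^{w}$ vanishes. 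Writing $\mathbf{f}^{w}=\mathbf{f}^{w}_{1}+i\,\mathbf{f}^{w}_{2}$ and applying Theorem~\ref{theodoresco} to each real-quaternionic summand, both Teodorescu transforms are H\"older continuous on all of $\mathbb{R}^{3}$; hence $\mathbf{F}^{\pm}$ extend continuously to $\overline{\Omega_{\pm}}$, the decay of the kernel $\mathscr{K}_{\psi^{\theta}}$ yields $\mathbf{F}^{-}(\infty)=0$, and the identity ${}^{\psi^{\theta}}D\,{}^{\psi^{\theta}}T=\mathrm{id}$ on $\Omega_{+}$ (and $0$ on $\Omega_{-}$) gives ${}^{\psi^{\theta}}D[\mathbf{F}^{\pm}]=0$ there. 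The telescoping $\mathbf{F}^{+}(t)+\mathbf{F}^{-}(t)=\mathbf{f}^{w}(t)=\mathbf{f}(t)$ on $\Gamma$ then delivers the decomposition \eqref{des}.

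The delicate point is that the $\mathbf{F}^{\pm}$ must actually solve \eqref{sed}, which for a vector field means ${}^{\psi^{\theta}}D[\mathbf{F}^{\pm}]=0$ \emph{together with} $Sc(\mathbf{F}^{\pm})=0$. The first half is already in hand; for the second I would invoke the Borel-Pompieu formula \eqref{fbpc}. Since the Cauchy transform depends only on boundary values and $\mathbf{f}^{w}|_{\Gamma}=\mathbf{f}$, formula \eqref{fbpc} identifies $\mathbf{F}^{+}={}^{\psi^{\theta}}K_{\Gamma}[\mathbf{f}]$ on $\Omega_{+}$ and $\mathbf{F}^{-}=-{}^{\psi^{\theta}}K_{\Gamma}[\mathbf{f}]$ on $\Omega_{-}$. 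The assumption $\mathbf{f}\in\mathscr{M}_{\psi^{\theta}}$ is precisely $Sc({}^{\psi^{\theta}}K_{\Gamma}[\mathbf{f}])=0$ off $\Gamma$, so each $\mathbf{F}^{\pm}$ has null scalar part; by the equivalence of \eqref{eq2} with \eqref{sed} these left-$\psi^{\theta}$-hyperholomorphic, purely vectorial fields are $\psi^{\theta}$-Laplacian in the sense of Definition~\ref{dcvl} and hence solve \eqref{sed}.

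For uniqueness I would subtract two admissible decompositions of $\mathbf{f}$: the difference is a vector field $\psi^{\theta}$-Laplacian in each of $\Omega_{\pm}$, vanishing at infinity, with matching continuous boundary traces across $\Gamma$. Splitting it into its real-quaternionic parts and applying a Painlev\'e-type glueing theorem (valid here since the structural set is irrelevant to its proof), the two pieces assemble into one function hyperholomorphic on all of $\mathbb{R}^{3}$; the decay at infinity forces it to vanish identically, yielding uniqueness. I expect the main obstacle to lie not in the algebra but in the continuity of the boundary values on a merely rectifiable $\Gamma$: this is exactly why the construction is routed through the Teodorescu transform of an $L_{p}$ function with $p>3$ (which is what the threshold $\mu>\tfrac{2}{3}$ secures), rather than through a classical Sokhotski-Plemelj analysis that would demand stronger smoothness of the boundary.
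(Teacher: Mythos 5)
Your proposal follows essentially the same route as the paper: Whitney extension of $\mathbf{f}$, the auxiliary transform ${}^{\psi^{\theta}}K^{*}_{\Gamma}[\mathbf{f}]=-{}^{\psi^{\theta}}T[{}^{\psi^{\theta}}D[\mathbf{f}^{w}]]+\mathbf{f}^{w}$ with continuity secured by Proposition \ref{lemmalp} and Theorem \ref{theodoresco} via the threshold $\mu>\tfrac{2}{3}$, and uniqueness by splitting into real quaternionic parts and invoking the Painlev\'e-type theorem of \cite{18}. The one place you go beyond the paper is in justifying, via the Borel--Pompieu formula and the definition of $\mathscr{M}_{\psi^{\theta}}$, that $Sc(\mathbf{F}^{\pm})=0$ --- a step the paper merely asserts --- and that addition is correct and welcome.
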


\section*{Acknowledgements}
This is part of the DGC's Master thesis, written under the supervision of JBR and MAPR at the Escuela Superior de F\'isica y Matem\'aticas.  Instituto Polit\'ecnico Nacional. DGC gratefully acknowledges the financial support of the Postgraduate Study Fellowship of the Consejo Nacional de Ciencia y Tecnolog\'ia (CONACYT).	JBR and MAPR were partially supported by Instituto Polit\'ecnico Nacional in the framework of SIP programs and by Universidad de las Am\'ericas Puebla, respectively.


\begin{thebibliography}{100}
\small{
\bibitem{26} R. Abreu-Blaya; J. Bory-Reyes; B. Schneider. \textit{On Cauchy type integrals related to the Cimmino system of partial differential equations}. Operator theory, operator algebras and applications, 81--92, Oper. Theory Adv. Appl., 242, Birkh\"auser/Springer, Basel, 2014. 
			
\bibitem{27} R. Abreu-Blaya; J. Bory-Reyes; A. Guzm\'an-Ad\'an; B. Schneider. \textit{Boundary value problems for the Cimmino system via quaternionic analysis}. Appl. Math. Comput. 219 (2012), no. 8, 3872--3881.

\bibitem{2} R. Abreu-Blaya; J. Bory-Reyes; M. Shapiro.  \textit{On the Laplacian vector fields theory in domains with rectifiable boundary}. Math. Methods Appl. Sci. 29 (2006), no. 15, 1861--1881.

\bibitem{17} R. Abreu-Blaya; J. Bory-Reyes. Boundary value problems for quaternionic monogenic functions on non-smooth surfaces. Adv. Appl. Clifford Algebras 9 (1999), no. 1, 1--22.

\bibitem{18} R. Abreu-Blaya; J. Bory-Reyes; D. Pe\~na-Pe\~na. \textit{Jump problem and removable singularities for monogenic functions}. J. Geom. Anal. 17 (2007), no. 1, 1--13.

\bibitem{33} A. V. Bitsadze. \textit{Boundary Value Problems for Second-Order Elliptic Equations}. North-Holland Publishing Co., Amsterdam; Interscience Publishers Division John Wiley and Sons, Inc., New York 1968.

\bibitem{28} S. Bock; K. G\"urlebeck; D. Legatiuk; H. M. Nguyen.  \textit{$\psi$-hyperholomorphic functions and a Kolosov-Muskhelishvili formula}. Math. Methods Appl. Sci. 38 (2015), no. 18, 5114--5123.

\bibitem{25} J. Bory-Reyes; R. Abreu-Blaya; M. A. P\'erez-de la Rosa; B. Schneider. \textit{Poincar\'e-Bertrand and Hilbert formulas for the Cauchy-Cimmino singular integrals}. Adv. Appl. Clifford Algebr. 27 (2017), no. 4, 2933--2960.

\bibitem{1} J. Bory-Reyes; R. Abreu-Blaya; M. A. P\'erez-de la Rosa; B. Schneider.  \textit{A quaternionic treatment of inhomogeneous Cauchy-Riemann type systems in some traditional theories}. Complex Anal. Oper. Theory 11 (2017), no. 5, 1017--1034. 

\bibitem{12} G. Cimmino.  \textit{Su alcuni sistemi lineari omogenei di equazioni alle derivate parziali del primo ordine}. (Italian) Rend. Sem. Mat. Univ. Padova 12, (1941). 89--113.

\bibitem{30} Y. Grigoriev. \textit{Regular quaternionic functions and their applications in three-dimensional elasticity}. XXIV ICTAM, 21-26, August 2016, Montreal, Canada.

\bibitem{22} J.\'O. Gonz\'alez-Cervantes; M. Shapiro. \textit{Hyperholomorphic Bergman spaces and Bergman operators associated with domains in $\mathbb{C}^2$}. Complex Anal. Oper. Theory 2 (2008), no. 2, 361-382.

\bibitem{10} K. G\"urlebeck; W. Spr\"ossig.  \textit{Quaternionic analysis and elliptic boundary value problems}. International Series of Numerical Mathematics, 89. Birkhäuser Verlag, Basel, 1990.		

\bibitem{14} K. G\"urlebeck; H. M. Nguyen.  \textit{On $\psi$-hyperholomorphic functions and a decomposition of harmonics}. Hypercomplex analysis: new perspectives and applications, 181--189, Trends Math., Birkh\"auser/Springer, Cham, 2014. 

\bibitem{11} K. G\"urlebeck; W. Spr\"ossig.  \textit{Quaternionic and Clifford Calculus for Physicists and Engineers}. Wiley: New York (1997). 
					
\bibitem{13} K. G\"urlebeck; J. Morais.  \textit{On orthonormal polynomial solutions of the Riesz system in $\mathbb{R}^{3}$}. Recent advances in computational and applied mathematics, 143--158, Springer, Dordrecht, 2011.

\bibitem{32} K. G\"urlebeck, U. K\"ahler, and M. Shapiro. \textit{On the $\Pi$-operator in hyperholomorphic function theory}. Adv. Appl. Clifford Algebras 9 (1999), 23-40.

\bibitem{8} J. Harrison; A. Norton. \textit{The Gauss-Green theorem for fractal boundaries}. Duke Math. J. 67 (1992), no. 3, 575--588.

\bibitem{4} N. Kiyoharu.  \textit{On the quaternion linearization of Laplacian $\Delta$}. Bull. Fukuoka Univ. Ed. III 35 (1985), 5--10 (1986).
			
\bibitem{5} N. Kiyoharu.  \textit{Hyperholomorphic functions of a quaternion variable}. Bull. Fukuoka Univ. Ed. III 32 (1982), 21--37 (1983).

\bibitem{9} V. V. Kravchenko; M. Shapiro.  \textit{Integral representations for spatial models of mathematical physics}. Pitman Research Notes in Mathematics Series, 351. Longman, Harlow, 1996.		

\bibitem{19} E. C. Lawrence; R. F. Gariepy.  \textit{Measure theory and fine properties of functions}. Studies in Advanced Mathematics. CRC Press, Boca Raton, FL, 1992.

\bibitem{21} I. M. Mitelman; M. Shapiro. \textit{Differentiation of the Martinelli-Bochner integrals and the notion of hyperderivability}. Math. Nachr. 172 (1995), 211--238.

\bibitem{31} A. Moreno Garc\'ia, T. Moreno Garc\'ia, R. Abreu Blaya, J. Bory Reyes. \textit{Inframonogenic functions and their applications in three dimensional elasticity theory}. Math. Meth. Appl. Sci. 41,  no. 10, 3622-3631, 2018. 

\bibitem{23} Gr. Moisil and N. Th\'eodoresco. Fonctions holomorphes dans l'espace, Mathematica Cluj 5, 142-159, 1931.
		
\bibitem{3} Muhammed-Naser. \textit{Hyperholomorphic functions}. (Russian) Sibirsk. Mat. Ž. 12 (1971), 1327--1340.
			
\bibitem{16} M. A. P\'erez-de la Rosa. \textit{On the Hilbert formulas on the unit sphere for the time-harmonic relativistic Dirac bispinors theory}. J. Math. Anal. Appl. 416 (2014), no. 2, 575--596.
		
\bibitem{15} M. Riesz. \textit{Clifford numbers and spinors} (Chapters I–IV) Lectures delivered October 1957-January 1958. Lecture Series, No. 38. The Institute for Fluid Dynamics and Applied Mathematics, University of Maryland, College Park, Md. 1958 193 pp. 

\bibitem{29} E. M. Stein.  \textit{Singular integrals and differentiability properties of functions}. Princeton Mathematical Series, No. 30 Princeton University Press, Princeton, N.J. 1970.

\bibitem{24} A. Sudbery. \textit{Quaternionic analysis}. Math. Proc. Cambridge Philos. Soc. 85, No. 2, 199-224, 1979. 
					
\bibitem{34} N. N. Tarkhanov. A remark on the Moisil-Teodorescu system. (Russian) Sibirsk. Mat. Zh. 28 (1987), no. 3, 208--213, 225. 
						
\bibitem{6} N. L. Vasilevsky; M. Shapiro.  \textit{Some questions of hypercomplex analysis}. Complex analysis and applications '87 (Varna, 1987), 523--531, Publ. House Bulgar. Acad. Sci., Sofia, 1989.
			
\bibitem{7} N. L. Vasilevsky; M. Shapiro.  \textit{Holomorphy, hyperholomorphy Toeplitz operators}. Uspehi Matematicheskih Nauk, 1989, v. 44, N 4 (268), p. 226-227 (Russian). English translation: Russian Math. Surveys, v. 44, no. 4, 1989, p. 196-197.

\bibitem{20}  M. S. Zhdanov. \textit{Integral transforms in geophysics}. Translated from the Russian by Tamara M. Pyankova. Springer-Verlag, Berlin, 1988. 
			
		}
		\end{thebibliography}
\end{document}